\theoremstyle{plain}
\newtheorem{theorem}{Theorem}
\newtheorem{lemma}{Lemma}
\newtheorem{proposition}{Proposition}
\newtheorem*{TA}{Theorem A}
\newtheorem*{TB}{Theorem B}
\theoremstyle{definition}
\theoremstyle{remark}
\newtheorem{remark}{Remark}
\newcommand{\sgn}{\text{sgn}}
\newcommand{\e}{\epsilon}
\newcommand{\R}{\mathbb R}
\newcommand{\T}{\mathbb T}
\newcommand{\N}{\mathbb N}
\newcommand{\Z}{\mathbb Z}
\newcommand{\vertiii}[1]{{\left\vert\kern-0.25ex\left\vert\kern-0.25ex\left\vert #1 
    \right\vert\kern-0.25ex\right\vert\kern-0.25ex\right\vert}}
\newenvironment{bcases}
  {\left\lbrace\begin{aligned}}
  {\end{aligned}\right\rbrace}
\numberwithin{equation}{section}
\begin{document}

\title[Control of Fifth Order KdV]{Control and Stabilization of the
Periodic Fifth Order Korteweg-de Vries Equation}
%%%%%%%%%%%%%%%%%%%%%%
%Author Information
%%%%%%%%%%%%%%%%%%%%%%
%%%%%%%%%%%%%%%%%%%%%%
\author{Cynthia Flores}
\address[C. Flores]{California State University, Channel Islands \\
Bell Tower East 2762 \\
One University Drive \\
Camarillo, CA 93012}
\email{cynthia.flores@csuci.edu}

%%%%%%%%%%%%%%%%%%%%%%
\author{Derek L. Smith}
\address[D. L. Smith]{B\^atiment des Math\'ematiques\\
EPFL\\
Station 8 \\
CH-1015 Lausanne\\
Switzerland.}
\email{derek.smith@epfl.ch}
%%%%%%%%%%%%%%%%%%%%%%
\keywords{Korteweg-de Vries equation, periodic domain, unique continuation property, propagation of regularity, exact controllability, stabilization.}
\subjclass[2010]{Primary: 35Q53, 93B05, 93D15}
\begin{abstract} 
We establish local exact control and local exponential
stability of periodic solutions of fifth order
Korteweg-de Vries type equations in $H^s(\T)$, $s>2$.
A dissipative term is incorporated into the control
which, along with a propagation of regularity
property, yields a smoothing effect permitting the
application of the contraction principle.
\end{abstract}
\maketitle

%%%%%%%%%%%%%%%%%%
\begin{section}{Introduction}\label{S:1}

%% KDV HIERARCHY
We study control of the fifth order Korteweg de-Vries (KdV) equation
\begin{equation} \label{E:KDV5}
\partial_tu - \partial_x^5u
	- 30u^2\partial_xu + 20\partial_xu\partial_x^2u
	+ 10u\partial_x^3u = 0
\end{equation}
where $u=u(x,t)$ denotes a real-valued function.
This equation appears in the sequence of nonlinear
dispersive equations
\begin{equation} \label{E:KDVH}
\partial_tu + \partial_x^{2j+1}u + Q_j(u,\partial_xu,\dots,\partial_x^{2j-1}u) = 0,
	\quad j\in\mathbb{Z}^+,
\end{equation}
known as the KdV hierarchy.
The specification of the polynomials $Q_j$ arises
from the observation in \cite{Gardner1967} that
the eigenvalues of the Schr\"odinger operator
$L(u)=\frac{d^2}{dx^2}-u(x,\cdot)$ are independent
of time when $u$ evolves as a solution to the usual
KdV equation
\begin{equation} \label{E:KDV}
\partial_tu + \partial_x^3u + u\partial_xu = 0.
\end{equation}
By imposing a Lax pair structure
\begin{equation*}
\partial_tu = [B_j;L(u)]
\end{equation*}
the same statement holds for any equation in the
sequence \eqref{E:KDVH} when $B_j$ is a
skew-symmetric operator chosen so that
$[B_j;L(u)]$ has degree zero \cite{MR0235310}.
The resulting hierarchy consists of a family of completely
integrable equations which can be solved by the
inverse scattering method.

%% FIFTH ORDER MODEL, PHYSICAL MOTIVATION
This paper focuses on a fifth order equation
generalizing \eqref{E:KDV5}
\begin{align} \label{E:K5G}
&\partial_tu - \partial_x^5u
	+ \beta_0\partial_x^3u + \beta_1\partial_xu \notag \\
	&\qquad\qquad
	+ c_0u\partial_xu + c_1u^2\partial_xu
	+ c_2\partial_xu\partial_x^2u + c_3u\partial_x^3u = 0,
\end{align}
where $\beta_0,\beta_1,c_0,c_1,c_2,c_3$
are real constants.
Solutions to this equation formally conserve volume
and arise in a number of physical situations.
With $c_1=0$, this equation was shown to model the
water wave problem for long, small amplitude waves
over a shallow bottom \cite{MR755731};
see also \cite{MR2177163}.
The family \eqref{E:K5G} also contains Benney's
model of short and long wave interaction \cite{MR0463715}.
Letting $c_0=1$ and $c_1=c_2=c_3=0$ yields the
Kawahara equation
\begin{equation} \label{E:KAWA}
\partial_tu - \partial_x^5u
	+ \beta_0\partial_x^3u + \beta_1\partial_xu
	+ u\partial_xu = 0,
\end{equation}
which describes the propagation of magneto-acoustic
waves in a plasma \cite{Kawahara1972}.

%% LWP
The initial value problem (IVP) associated to these
equations is naturally studied in the Sobolev scale
\begin{equation*}
H^s(\mathbb{K})
    = (1-\partial_x^2)^{-s/2} L^2(\mathbb{K}), \qquad s\in\R,
\end{equation*}
$\mathbb{K}=\T$ or $\R$.
For equations \eqref{E:KDV5}, \eqref{E:KDV} and
\eqref{E:KAWA}, the problem of determining the minimal
Sobolev regularity required to ensure well-posedness of
the associated IVP has been studied extensively.
%% LWP - KDV
The KdV equation is globally well-posed
in $H^s(\R)$ for $s\geq-3/4$ and $H^s(\T)$ for $s\geq-1/2$.
We mention the works 
\cite{MR1211741}, \cite{MR1209299,MR1215780},
\cite{MR1969209}, \cite{MR2531556} and \cite{MR2501679}
in this regard.
For the Kawahara equation we have
local well-posedness in $H^s(\R)$ for $s\geq-2$
\cite{MR2767079} and global well-posedness in $H^s(\R)$
for $s>-38/21$ \cite{MR2989690}.
In the periodic setting, the equation is locally well-posedness
in $H^s(\T)$ for $s\geq-3/2$ and globally well-posedness
in $H^s(\T)$, $s\geq-1$ \cite{MR2989690}.
%% LWP - KDV5
Ponce \cite{MR1216734} established local well-posedness
for the fifth order equation \eqref{E:K5G} in
$H^s(\R)$, $s\geq4$, using sharp linear estimates and the
Bona-Smith argument \cite{MR0385355}.
Kenig, Ponce and Vega investigated a class of equations
containing the KdV hierarchy in polynomially weighted
Sobolev spaces by combining a commuting vector
field identity with the contraction principle
\cite{MR1321214,MR1195480}.
Pilod \cite{MR2446185} showed that for each $j\geq2$,
the solution map
$H^s(\mathbb{R}) \ni u_0
	\mapsto u \in C([0,T];H^s(\mathbb{R}))$
corresponding to the IVP for the equation
\begin{equation*}
\partial_tu + \partial_x^{2j+1}u \pm u\partial_x^ku = 0,
	\qquad k>j,
\end{equation*}
is not $C^2$ at the origin for any $s\in\mathbb{R}$.
In fact, it is not even uniformly continuous \cite{MR2455780}.
Thus, in contrast to the KdV equation, higher order members
of the KdV hierarchy \eqref{E:KDVH} cannot be solved using the
contraction principle in $H^s(\R)$ alone.
Kwon \cite{MR2455780} introduced a corrected energy
and refined Strichartz estimate to establish local
well-posedness for the fifth order KdV equation
\eqref{E:KDV5} in $H^s(\R)$, $s>5/2$.
Kenig and Pilod \cite{MR3513119} applied this technique to
a class of equations containing the KdV hierarchy,
obtaining local well-posedness in $H^s(\R)$
for $s>4j-9/2$, $j\geq2$.
Global well-posedness of the fifth order KdV equation in
$H^2(\R)$ was established simultaneously by Guo, Kwak and
Kwon \cite{MR3096990} and Kenig and Pilod \cite{MR3301874} using
a Bourgain space approach.
In the periodic setting, Schwarz \cite{MR761761} obtained
the existence of weak solutions to equations in the KdV
hierarchy \eqref{E:KDVH} corresponding to data in $H^s(\T)$,
$s\geq2j+1$ and $s\in\Z^+$, with uniqueness holding under
the condition $s\geq3j+1$.
Recently, Kwak \cite{MR3473453} obtained global
well-posedness of equation \eqref{E:KDV5} in $H^2(\T)$.
The proof relies somewhat on the completely
integrable structure of the equation.

%% CONTROL THEORY SETUP
For the forced fifth order equation
\begin{align} \label{E:K5F}
&\partial_tu - \partial_x^5u
	+ \beta_0\partial_x^3u + \beta_1\partial_xu \notag \\
	&\qquad\qquad\qquad
	+ c_0u\partial_xu + c_1u^2\partial_xu
	+ c_2\partial_xu\partial_x^2u + c_3u\partial_x^3u = F(x,t),
\end{align}
on a periodic domain, we investigate two questions
central to control theory.

\vspace{\baselineskip}

{\bf Exact Control Problem}
Given an initial state $u_0$ and terminal state $u_1$,
can one find an appropriate forcing function $F$ so that
equation \eqref{E:K5F} admits a solution $u=u(x,t)$
which satisfies $u(\cdot,0)=u_0$ and $u(\cdot,T)=u_1$?

\vspace{\baselineskip}

{\bf Stabilization Problem}
Can one find a feedback law $F=Ku$ so that the
resulting closed-loop system is asymptotically stable
as $t\rightarrow+\infty$?

\vspace{\baselineskip}

As solutions to \eqref{E:K5F} satisfy the identity
\begin{equation*}
\frac{d}{dt} \int_\T u(x,t) dx = \int_\T F(x,t) dx,
\end{equation*}
we achieve conservation of volume by
choosing $F$ of the form
\begin{equation} \label{CTRL}
(Gh)(x,t) := g(x)\left(h(x,t) - \int_\T g(y)h(y,t) dy\right),
\end{equation}
where $g \in C^\infty(\T)$ is nonnegative, has the mean
value property
\begin{equation*}
2\pi[g] = \int_\T g(x)dx = 1,
\end{equation*}
and is allowed to be supported in a proper subinterval
of the torus.

%% CONTROL - KDV
Russell and Zhang obtained  the first control results for
an equation in the KdV hierarchy.
Using the smoothing effect discovered by Bourgain
\cite{MR1209299,MR1215780} they established
{\em local exact controllability} for the
KdV equation \eqref{E:KDV}.

\begin{TA} \cite{MR1360229}
Let $T>0$ and $s\geq0$ be given.
Then there exists $\delta>0$ such that for any $u_0,u_1 \in H^s(\T)$
with $[u_0]=[u_1]$ and
\begin{equation*}
\|u_0\|_s + \|u_1\|_s \leq \delta,
\end{equation*}
one can find a control $h \in L^2(0,T,H^s(\T))$ such that the equation
\begin{equation*}
\partial_tu + \partial_x^3u + u\partial_xu = Gh
\end{equation*}
has a solution $u \in C([0,T];H^s(\T))$ satisfying
\begin{equation*}
u(\cdot,0)=u_0  \quad\text{and}\quad  u(\cdot,T)=u_1.
\end{equation*}
\end{TA}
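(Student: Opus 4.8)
The plan is to prove Theorem A in two stages: exact controllability of the \emph{linearized} equation by the Hilbert Uniqueness Method, followed by a contraction argument that treats the quadratic term $u\partial_xu$ as a perturbation in a Bourgain space $X^{s,b}(\T\times(0,T))$ with $b$ slightly larger than $1/2$. One may assume $[u_0]=[u_1]=0$: since $Gh$ has zero mean, $[u(\cdot,t)]\equiv[u_0]$, and the general case reduces to the mean-zero case by absorbing the resulting drift $[u_0]\partial_xu$ into the dispersive group $W(t)=e^{-t\partial_x^3}$, which shifts the eigenvalues of the generator from $ik^3$ to $i(k^3-[u_0]k)$ but preserves the only structural feature needed below, namely that consecutive eigenvalue gaps tend to $\infty$. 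For the linear control system $\partial_tu+\partial_x^3u=Gh$ one first checks, via the controllability Gramian $\mathcal G_T=\int_0^T W(t)\,GG^{\ast}\,W(t)^{\ast}\,dt$ (proven for $s=0$, where $G$ is self-adjoint, and then bootstrapped to general $s\ge0$ by propagating regularity), that reachability of every target from the origin at time $T$ is equivalent to the observability inequality
\[
\|\varphi_0\|_s^2\;\le\;C\int_0^T\|G\,W(-t)\varphi_0\|_s^2\,dt,\qquad \varphi_0\in H^s(\T),\ [\varphi_0]=0 .
\]

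The proof of this observability inequality should be the heart of the matter. I would argue by compactness-uniqueness: were it false, there would be $\varphi_0^n$ with $\|\varphi_0^n\|_s=1$ and $\int_0^T\|G\,W(-t)\varphi_0^n\|_s^2\,dt\to0$, and a compactness argument based on the smoothing of the linear flow (the Kato-type local gain of one derivative on compact subsets of $\omega:=\{x:g(x)>0\}$, combined with the equation) would produce a strongly convergent subsequence with limit $\varphi_0\neq0$ satisfying $G\,W(-t)\varphi_0=0$ for a.e.\ $t$. Setting $\varphi(x,t)=W(-t)\varphi_0$ and using $G\varphi=g\bigl(\varphi-\int_\T g\varphi\bigr)$, one sees that $\varphi(\cdot,t)$ is spatially constant on $\omega$ for a.e.\ $t$, so it remains to establish the \emph{unique continuation property}: a solution of the linear evolution that is spatially constant on $\omega\times(0,T)$ is constant in $x$. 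Expanding $\varphi_0=\sum_k a_k e^{ikx}$ gives $\varphi(x,t)=\sum_k a_k e^{ikx}e^{i\mu_k t}$ with real frequencies $\mu_k=-k^3+[u_0]k$ satisfying $|\mu_k-\mu_j|\to\infty$, so constancy on $\omega$ forces $\sum_k a_k\bigl(e^{ikx}-e^{ikx'}\bigr)e^{i\mu_k t}=0$ on $(0,T)$ for all $x,x'\in\omega$; Ingham's theorem --- with the finitely many close frequencies dispatched by linear independence of exponentials on an interval --- makes $\{e^{i\mu_k t}\}$ a Riesz sequence in $L^2(0,T)$, whence $a_k\bigl(e^{ikx}-e^{ikx'}\bigr)=0$, and since $e^{ikx}$ is non-constant on the interval $\omega$ for $k\neq0$, every $a_k$ with $k\neq0$ vanishes. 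Thus $\varphi_0=a_0$ is constant, contradicting $\varphi_0\neq0$ and $[\varphi_0]=0$.

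Granting the observability inequality, $\mathcal G_T$ is boundedly invertible on $\{[\varphi]=0\}\subset H^s(\T)$, and the classical HUM construction yields a bounded \emph{linear} operator $(u_0,u_1,f)\mapsto h=\Phi(u_0,u_1,f)\in L^2(0,T;H^s(\T))$ steering the inhomogeneous linear equation $\partial_tv+\partial_x^3v=Gh+f$ from $u_0$ to $u_1$. I would then close the nonlinear problem by a fixed point: for $u$ in a small ball of $X^{s,b}_T$ set $f=-u\partial_xu$ and $h=\Phi(u_0,u_1,f)$, and let $\mathcal N(u)$ be the solution of $\partial_tv+\partial_x^3v=Gh-u\partial_xu$ with $v(\cdot,0)=u_0$; any fixed point solves the control problem and satisfies $v(\cdot,T)=u_1$ by the construction of $\Phi$. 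The periodic bilinear estimate $\|\partial_x(uv)\|_{X^{s,b-1}_T}\lesssim\|u\|_{X^{s,b}_T}\|v\|_{X^{s,b}_T}$ for $s\ge0$ (Bourgain's smoothing for the periodic KdV equation), together with the $X^{s,b}_T$-boundedness of the free evolution, of the Duhamel operator, and of $(u_0,u_1,f)\mapsto Gh$, shows that when $\|u_0\|_s+\|u_1\|_s\le\delta$ with $\delta=\delta(s,T)$ small enough, $\mathcal N$ maps a small ball of $X^{s,b}_T$ into itself and is a contraction there.

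Finally, the embedding $X^{s,b}_T\hookrightarrow C([0,T];H^s(\T))$ for $b>1/2$ places the fixed point $u$ in the asserted solution class, and $h=\Phi(u_0,u_1,-u\partial_xu)\in L^2(0,T;H^s(\T))$ by construction, which completes the proof. Besides the observability step, the point requiring care is the compatibility of norms in the iteration: $\Phi$ must depend on its source $f$ continuously in precisely the topology in which $u\partial_xu$ is controlled --- i.e.\ $f\mapsto Gh$ must be bounded from $X^{s,b-1}_T$ into a space on which the Duhamel operator maps boundedly back into $X^{s,b}_T$ --- and this is exactly where Bourgain's smoothing for the periodic equation is indispensable.
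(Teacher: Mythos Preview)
Theorem~A is quoted here from Russell--Zhang and not proved in the paper; the only hint given is that the argument uses Bourgain's periodic smoothing. Your nonlinear step---treating $u\partial_xu$ via the bilinear estimate $\|\partial_x(uv)\|_{X^{s,b-1}_T}\lesssim\|u\|_{X^{s,b}_T}\|v\|_{X^{s,b}_T}$ and closing by contraction---is exactly that mechanism and is correct. The construction of the HUM operator $\Phi$ and the fixed-point scheme are also fine.

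The gap is in your linear observability argument. You invoke ``the Kato-type local gain of one derivative on compact subsets of $\omega$'' for the free periodic Airy group, and this estimate is \emph{false} on $\T$: the single mode $u=e^{ikx+ik^3t}$ has $\|u_0\|_{L^2}^2=2\pi$ while $\int_0^T\int_I|\partial_xu|^2\,dx\,dt=k^2|I|T$, so no bound $\int_0^T\|\partial_xu\|_{L^2(I)}^2\,dt\le C\|u_0\|^2$ can hold. Kato's local smoothing is a real-line phenomenon; on the torus the linear group is unitary on every $H^s$ and gains nothing, even locally. Without that gain, Aubin--Lions only gives $\varphi_0^n\to\varphi_0$ strongly in $H^{s'}$ for $s'<s$ (and weakly in $H^s$), and after your unique continuation you obtain $\varphi_0=0$, which does \emph{not} contradict $\|\varphi_0^n\|_s=1$. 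The compactness--uniqueness loop does not close as written.

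The fix is to drop compactness for the linear step and use Ingham directly for observability rather than only for unique continuation: since the frequencies $k^3$ have gaps tending to infinity, Ingham (with the standard finite-defect refinement for the finitely many close pairs) gives $\int_0^T|\sum_k a_ke^{ik^3t}|^2\,dt\asymp\sum_k|a_k|^2$ for every $T>0$, and expanding $GW(-t)\varphi_0$ in the eigenbasis yields the observability inequality outright. This is the route Russell--Zhang actually take. If you insist on compactness--uniqueness, you must run it in two stages---first prove observability modulo a compact remainder $\|\varphi_0\|_{s-1}^2$, then remove that term by a second contradiction argument---but that is not what you described.
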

Additionally, they proved {\em local exponential stability}.
\begin{TB} \cite{MR1360229}
Let $\kappa>1$ and $s=0$ or $s\geq1$ be given.
Then there exists positive constants, $C$, $\delta$ and $\lambda$ such
that if $u_0 \in H^s(\T)$ with $\|u_0-[u_0]\|_s\leq\delta$, then the
corresponding solution $u$ of the system
\begin{equation*}
\partial_tu + \partial_x^3u + u\partial_xu = -\kappa Gu
\end{equation*}
satisfies
\begin{equation*}
\|u(\cdot,t)-[u_0]\|_s \leq Ce^{-\lambda t}\|u_0-[u_0]\|_s
\end{equation*}
for all $t\geq0$.
\end{TB}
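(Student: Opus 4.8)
The plan is to combine exponential decay of the damped \emph{linear} semigroup with a perturbative treatment of the nonlinearity built on the same bilinear smoothing estimate that underlies Theorem~A, and then to iterate in time. First I would reduce to the mean-zero case: writing $u=[u_0]+v$ and translating $x\mapsto x-[u_0]t$ kills the transport term $[u_0]\partial_xv$ produced by the nonlinearity, and since $\int_\T Gu\,dx=0$ the mean is conserved by the damped flow, so it suffices to prove the estimate for $v_0\in H^s_0(\T):=\{w\in H^s(\T):[w]=0\}$ with $\partial_tv+\partial_x^3v+v\partial_xv=-\kappa Gv$. Write $W(t)$ for the $C_0$-semigroup generated by $-(\partial_x^3+\kappa G)$ on $L^2_0(\T)$.

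\emph{Linear decay.} For $w=W(t)w_0$ the dispersive part is skew-adjoint, so
\begin{equation*}
\tfrac{d}{dt}\|w(t)\|_0^2=-2\kappa(Gw,w)=-2\kappa\Big(\int_\T g\,w^2\,dx-\big(\textstyle\int_\T g\,w\,dx\big)^2\Big)\le 0,
\end{equation*}
the inequality being Cauchy--Schwarz for the probability measure $g\,dx$. To upgrade this to exponential decay it suffices to establish an observability inequality: there exist $T>0$ and $c>0$ with $\|w_0\|_0^2\le c\int_0^T(Gw(t),w(t))\,dt$ for all $w_0\in L^2_0$. By the standard compactness--uniqueness argument (negate the inequality, pass to a weak limit, and use that $G$ is compact on $L^2$), this reduces to the unique continuation statement: if $w$ solves the free equation $\partial_tw+\partial_x^3w=0$ and $(Gw,w)\equiv0$ on $(0,T)$, then $w\equiv0$ in $L^2_0$. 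Vanishing of $(Gw,w)$ forces $w(\cdot,t)$ to be constant on the open set $\omega:=\{g>0\}$ for a.e.\ $t$, so $\partial_xw$ is a solution of the free linear KdV equation vanishing on $\omega\times(0,T)$; the unique continuation property for the linear KdV equation on $\T$ then gives $\partial_xw\equiv0$, hence $w\equiv0$ on $L^2_0$. With observability in hand, the energy identity yields $\|W(T)w_0\|_0^2\le(1-\eta)\|w_0\|_0^2$ for some $\eta\in(0,1)$ depending on $\kappa$, and iterating over $[nT,(n+1)T]$ gives $\|W(t)w_0\|_0\le Ce^{-\lambda t}\|w_0\|_0$. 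For $s\ge1$ one runs the same scheme after commuting $\partial_x^m$ through the equation, the commutators $[\partial_x^m,\kappa G]$ being smoothing and absorbable, together with a propagation-of-regularity argument to transport the observability estimate up to $H^s_0$.

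\emph{Nonlinear decay.} By Duhamel, $v(t)=W(t)v_0-\int_0^tW(t-t')(v\partial_xv)(t')\,dt'$. Bourgain's bilinear estimate $\|v\partial_xv\|_{X^{s,b-1}}\lesssim\|v\|_{X^{s,b}}^2$ (the smoothing input behind Theorem~A), together with the a priori bound $\tfrac{d}{dt}\|v\|_0^2=-2\kappa(Gv,v)\le0$, produces a global solution for small data with $\|v\|_{X^{s,b}([0,T])}\lesssim\|v_0\|_s$, hence $\|v(T)\|_s\le\|W(T)v_0\|_s+C\|v_0\|_s^2\le e^{-\lambda_0 T}\|v_0\|_s+C\|v_0\|_s^2$. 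Picking $\lambda\in(0,\lambda_0)$ and then $\delta$ so small that $C\delta\le e^{-\lambda T}-e^{-\lambda_0 T}$ gives $\|v(T)\|_s\le e^{-\lambda T}\|v_0\|_s\le\delta$, so the estimate propagates: $\|v(nT)\|_s\le e^{-\lambda nT}\|v_0\|_s$, and controlling $\|v(t)\|_s$ on each subinterval by $\|v(nT)\|_s$ yields $\|v(t)\|_s\le Ce^{-\lambda t}\|v_0\|_s$ for all $t\ge0$; undoing the reduction gives the claim for $u$.

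\emph{Main obstacle.} The crux is the unique continuation property for the linear KdV equation on the torus, together with the propagation-of-regularity machinery needed to run the scheme at the $H^s$ level; this is exactly where the periodic geometry matters, since on $\R$ one could instead invoke Kato smoothing. A secondary technical point is arranging the nonlinear Duhamel estimate so that the Bourgain-space norm of $v$ on $[0,T]$ is bounded by $\|v_0\|_s$ with a constant uniform in $n$: it is the genuine dissipativity built into $G$ (the quantity $(Gv,v)=\int_\T g\,v^2\,dx-(\int_\T g\,v\,dx)^2\ge0$ being strictly positive off constants) that prevents the time iteration from accumulating, which is why a dissipative rather than a conservative control is used here.
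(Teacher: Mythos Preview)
Theorem~B is quoted in this paper as a result of Russell and Zhang \cite{MR1360229}; the present paper does not supply its own proof of this statement. There is therefore no ``paper's own proof'' against which to check your proposal directly.

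That said, your outline is a faithful rendering of the standard argument, and it parallels closely what the present paper does for its fifth-order analogue (Theorem~\ref{T:STAB}). The ingredients match: reduction to mean zero; linear exponential decay via an observability inequality proved by contradiction/compactness and reduced to a unique continuation property; and a perturbative treatment of the nonlinearity. One structural difference worth noting is that, for the fifth-order problem, the authors replace the feedback $-\kappa Gu$ by the dissipative term $-GD^3Gu$, which furnishes a genuine $3/2$-derivative smoothing effect (see \eqref{SMOOTH} and Proposition~\ref{P:UNI}). This allows them to run the contraction in the space $Z_{s,T}=C([0,T];H^s_0)\cap L^2(0,T;H^{s+3/2}_0)$ rather than in Bourgain spaces, and to build the exponential decay directly into the fixed-point norm $\|u\|_X=\sup_n e^{n\lambda}\vertiii{u}_n$ (Proposition~\ref{P_NLH} and the proof of Theorem~\ref{T:STAB}), so no a posteriori time-iteration step of the type you describe is needed. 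For the third-order equation with the weaker damping $-\kappa Gu$, no such smoothing is available, and your use of the Bourgain bilinear estimate in $X^{s,b}$ is exactly the substitute Russell and Zhang employ.

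Your sketch is correct in spirit; the only places where a referee would ask for more detail are (i) the compactness step in the observability argument (one must verify that the limiting $w$ indeed satisfies $Gw\equiv0$, not merely $(Gw,w)=0$, before invoking unique continuation---your equality case of Cauchy--Schwarz does give this, but it should be stated), and (ii) the uniformity in $n$ of the $X^{s,b}$ bound on each subinterval, which you flag but do not prove.
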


Laurent, Rosier and Zhang \cite{MR2753618} later proved
{\em global exact controllability} and {\em global
exponential stability} for the KdV equation
\eqref{E:KDV} in $H^s(\T)$, $s\geq0$.
Their technique relied on the structure of the commutator
$[\phi;\partial_x^3]$ for $\phi \in C^\infty(\T)$.
On the real line, Kato \cite{MR759907} utilized this structure
to conclude that the solution to \eqref{E:KDV} corresponding
to data $u_0 \in H^s(\R)$ lies in $L^2(0,T;H_{\text{loc}}^{s+1}(\R))$.
Though such a smoothing effect is false on the torus,
the same formal computation reveals that if the solution
lies in $L^2(0,T;H^{s+1}(\Omega))$ for some open set
$\Omega\subset\T$, then one may conclude that it also
lies in $L^2(0,T;H^{s+1}(\T))$.
This propagation of regularity, along with a similar
propagation of compactness lemma, was previously applied
to control of wave \cite{MR2253466}
and Schr\"odinger equations \cite{MR2654198,MR2644360}.
Though not discussed further here, we mention the extensive
work on the control theory of the Korteweg-de Vries equation
on a bounded domain, a review of which may be found in
the survey \cite{MR2565262}.

%% CONTROL - KAWAHARA
We now discuss control results pertaining to equation
\eqref{E:K5G}.
The situation is most developed for the Kawahara equation;
in particular, Zhao and Zhang \cite{MR3356494} applied the
method of \cite{MR2753618} to obtain global exact control
and global exponential stability for periodic solutions in
$H^s(\T)$, $s\geq0$.
Moreover, exponential stability has been demonstrated for
the initial-boundary value problem associated to the
Kawahara equation \eqref{E:KAWA} on an interval in a
number of situations.
In the presence of a feedback term $F=-g(x)u$, we mention
the works \cite{MR2775188} and \cite{MR2834850}.
% Latter paper has good literature review.
Without a feedback term, see \cite{MR2434910} for the
case of zero boundary conditions and \cite{MR3462574}
for a boundary dissipation mechanism.

%% CONTROL - KDV5 ON INTERVAL
In the case of $c_2^2+c_3^2>0$,
Glass and Guerrero \cite{MR2569891} established local
controllability to trajectories for the boundary value problem
associated to equation \eqref{E:K5G} by using Carleman
estimates and a smoothing effect of Kato type derived
from the boundary conditions.
To the best of our knowledge, there are no results
concerning the exact control or exponential stability
of equation \eqref{E:K5G} in the periodic setting
when ${c_2^2+c_3^2>0}$.
  
%% Statement of Results
In this paper, we present affirmative answers to the exact
control and stabilization problems for equation
\eqref{E:K5F} on a periodic domain.
To overcome the lack of an adequate smoothing effect,
we adopt the approach of \cite{MR3335395}.
To stabilize \eqref{E:K5F} we consider a feedback law
\begin{equation*}
F = -GD^3Gu.
\end{equation*}
In the linear homogeneous case, scaling the resulting
equation
\begin{equation} \label{E:LIN}
\partial_tv - \partial_x^5v
	+ \beta_0\partial_x^3v + \beta_1\partial_xv
	+ GD^3Gv = 0
\end{equation}
by
$v$ yields
\begin{equation*}
\frac12 \|v(T)\|_{L^2(\T)}^2
	+ \int_0^T \|D^{3/2}(Gv)\|_{L^2(\T)}^2 \, dt
		= \frac12 \|v_0\|_{L^2(\T)}^2,
\end{equation*}
which suggests a gain of $3/2$ derivatives in the control
region ${\omega=\{x\in\T : g(x)>0\}}$.
Using a propagation of regularity property we conclude
\begin{equation} \label{SMOOTH}
\|v\|_{L^2(0,T;H^{3/2}(\T))} \leq c(T;\|v_0\|_{L^2(\T)}).
\end{equation}
By considering the forcing term $F=-GD^3Gv+Gh$, a similar
smoothing effect holds and we obtain exact controllability
of the resulting linear equation by a classical observability
argument.
Thus a contraction principle argument yields the following
nonlinear result.

\begin{theorem} \label{T:CONT}
Let $T>0$ and $s>2$ be given.
Then there exists $\delta>0$ such that for any
$u_0,u_1 \in H^s(\T)$ with $[u_0]=[u_1]$ and
\begin{equation*}
\|u_0\|_s + \|u_1\|_s \leq \delta,
\end{equation*}
one can find a control $h \in L^2(0,T,H^{s-3/2}(\T))$
such that the equation \eqref{E:K5F} with
$F=-GD^3Gu+Gh$ has a solution $u \in C([0,T];H^s(\T))$ satisfying
\begin{equation*}
u(\cdot,0)=u_0  \quad\text{and}\quad  u(\cdot,T)=u_1.
\end{equation*}
\end{theorem}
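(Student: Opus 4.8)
The plan is to follow the now-standard route for control of dispersive equations on the torus: first solve the linear control problem with the dissipative feedback built in, then close the nonlinear problem by a fixed point argument, using the $3/2$-derivative smoothing gain to absorb the loss in the nonlinearity. I would first set up the linear Cauchy problem for $\partial_t v - \partial_x^5 v + \beta_0 \partial_x^3 v + \beta_1 \partial_x v + GD^3Gv = Gh$, establishing well-posedness in $C([0,T];H^s(\T))$ together with the a priori smoothing bound $\|v\|_{L^2(0,T;H^{s+3/2}(\T))} \lesssim \|v_0\|_s + \|h\|_{L^2(0,T;H^{s-3/2})}$. This requires two ingredients proved in the body of the paper: the energy estimate coming from the $L^2$ (and higher $H^s$) multiplier identity, which yields a gain of $3/2$ derivatives of $Gv$ localized to $\omega=\{g>0\}$, and the propagation of regularity property, which upgrades that local gain to a global one on all of $\T$. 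The commutator structure $[\phi;\partial_x^5]$ plays the role here that $[\phi;\partial_x^3]$ plays in \cite{MR2753618}.

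Next I would address exact controllability of the linear equation. Define the bounded linear map $\Gamma$ sending $(v_0,v_1,h)$-data to the solution at time $T$; by duality (HUM), exact controllability is equivalent to an observability inequality for the adjoint system $\partial_t\psi - \partial_x^5\psi + \beta_0\partial_x^3\psi + \beta_1\partial_x\psi - GD^3G\psi = 0$ run backward, namely $\|\psi(T)\|_{-\,}^2 \lesssim \int_0^T \|D^{3/2}(G\psi)\|_{L^2}^2\,dt$ in the appropriate dual norms. I would prove this observability estimate by a compactness–uniqueness argument: the smoothing and propagation of regularity lemmas give a compactness property reducing the inequality to a unique continuation statement — if $G\psi \equiv 0$ on $(0,T)\times\T$ and $\psi$ solves the fifth-order equation, then $\psi$ is identically a constant (which, after the mean-zero normalization afforded by the projection in \eqref{CTRL}, means $\psi\equiv 0$). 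Unique continuation would be handled either via Holmgren's theorem on the support of $g$ combined with the analyticity/propagation structure, or by an eigenfunction expansion argument in the spirit of the KdV case. From observability one recovers the bounded linear control operator $h = \Phi(v_0,v_1)$ with $\|h\|_{L^2(0,T;H^{s-3/2})} \lesssim \|v_0\|_s + \|v_1\|_s$.

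With the linear theory in hand, I would close the nonlinear problem by Banach fixed point. Given $u_0,u_1$ small in $H^s$, write $u = $ (free dissipative-linear evolution of $u_0$) $+$ (linear evolution driven by the controlled forcing $Gh$) $+$ (Duhamel term from the nonlinearity $N(u) = -c_0u\partial_xu - c_1u^2\partial_xu - c_2\partial_xu\partial_x^2u - c_3u\partial_x^3u$), and choose $h = h(u)$ via the linear control operator applied to $u_1$ minus the contributions of the free evolution and the nonlinear Duhamel term evaluated at time $T$. This defines a map $\Lambda$ on a small ball of $C([0,T];H^s)\cap L^2(0,T;H^{s+3/2})$; the terminal condition $u(T)=u_1$ holds by construction and $u(0)=u_0$ automatically. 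The key estimate is bilinear/trilinear: one must bound $\|\int_0^t S(t-\tau)N(u)\,d\tau\|$ in the resolution space. Here the worst term is $c_3u\partial_x^3u$, which loses three derivatives; the point is that placing $u$ in $L^2(0,T;H^{s+3/2})$ against $\partial_x^3u \in L^2(0,T;H^{s-3/2})$ and using $s>2$ (so that $H^{s-3/2}$, $s-3/2 > 1/2$, is an algebra-friendly space and the low-regularity factor can be put in $L^\infty_x$) yields $\|N(u)\|_{L^1(0,T;H^{s-3/2})} \lesssim T^{\theta}\|u\|_{X}^2(1+\|u\|_X)$ for some $\theta>0$, which is exactly the regularity at which the linear control operator accepts a forcing. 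Contraction then follows for $\delta$ small.

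The main obstacle I anticipate is the unique continuation property underlying observability: ruling out nontrivial solutions of the fifth-order equation that vanish (together with the mean-subtraction) on $\omega$ for all $t\in(0,T)$. Unlike third-order KdV, the fifth-order symbol and the presence of the $c_3 u\partial_x^3 u$ term complicate both the Holmgren-type argument (the equation need not have the right characteristic geometry after linearization) and any Fourier/spectral approach. A secondary technical nuisance is making the propagation of regularity lemma quantitative enough to yield the smoothing bound \eqref{SMOOTH} with constants depending only on $\|v_0\|$ — the commutator $[g\,\cdot\,;\partial_x^5]$ generates lower-order terms with up to four derivatives that must be controlled by the $3/2$-gain plus interpolation, and getting this bookkeeping right for general $s>2$ rather than just $s=0$ is where most of the analytic labor will go.
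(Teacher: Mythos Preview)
Your outline is correct and follows essentially the paper's own route: linear smoothing in $Z_{s,T}=C([0,T];H^s)\cap L^2(0,T;H^{s+3/2})$ via energy identities plus propagation of regularity, linear exact controllability by HUM reduced to an observability inequality proved by compactness--uniqueness and a unique continuation property, and then a Banach fixed point in $Z_{s,T}$ using the linear control operator to correct the terminal state. A few small slips worth fixing: the nonlinear forcing should be placed in $L^2(0,T;H^{s-3/2})$ rather than $L^1$, since that is what the linear estimate \eqref{ZST} accepts; there is no $T^\theta$ gain in the bilinear bound $\|u\partial_x^3u\|_{L^2_TH^{s-3/2}}\le c\|u\|_{L^\infty_TH^s}\|u\|_{L^2_TH^{s+3/2}}$, so contraction comes purely from the smallness of $\delta$; and the unique continuation you flag as the main obstacle is in fact handled cleanly in the paper---once propagation of regularity bootstraps the solution to $C^\infty$, one invokes the Saut--Scheurer result \cite{MR871574}.
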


\noindent
Similarly, the linear exponential stability of solutions
to \eqref{E:LIN} combined with the contraction principle
in an appropriate space yields exponential stability for
the nonlinear problem.

\begin{theorem} \label{T:STAB}
Let $s>2$ be given.
Then there exists constants $\rho,\lambda,C>0$,
such that for any $u_0 \in H^s(\T)$
with ${\|u_0-[u_0]\|_s\leq\rho}$, equation \eqref{E:K5F}
with $F=-GD^3Gu$ admits a unique solution satisfying
\begin{equation*}
u \in C([0,T];H^s(\T)) \cap L^2(0,T;H^{s+3/2}(\T))
\end{equation*}
for any $T>0$ and such that
\begin{equation*}
\|u(t)-[u_0]\|_s \leq Ce^{-\lambda t}\|u_0-[u_0]\|_s.
\end{equation*}
\end{theorem}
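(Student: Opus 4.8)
The plan is to deduce Theorem~\ref{T:STAB} from exponential stability of the linear semigroup generated by \eqref{E:LIN}, via a contraction argument iterated over consecutive time intervals. Since every nonlinear term of \eqref{E:K5F} with $F=-GD^3Gu$ is a perfect $x$-derivative and the range of $G$ consists of mean-zero functions, $\frac{d}{dt}\int_\T u\,dx=0$, so $[u_0]$ is conserved; writing $u=w+[u_0]$, the perturbation $w$ has mean zero and solves an equation of the form \eqref{E:LIN} with $\beta_0,\beta_1$ shifted by quantities linear and quadratic in $[u_0]$ and with the same nonlinearity augmented by one extra quadratic term $2c_1[u_0]\,w\partial_xw$ (one checks $GD^3Gu=GD^3Gw$, so no correction arises there). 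It therefore suffices to work in the mean-zero space $H^s_0(\T)$, with $W(t)$ the flow of \eqref{E:LIN} (uniformly for $\beta_0,\beta_1$ ranging in a bounded set), and to prove: (i) the linear decay $\|W(t)v_0\|_s\le Ce^{-\lambda t}\|v_0\|_s$ for $v_0\in H^s_0(\T)$; (ii) the linear smoothing $\|W(\cdot)v_0\|_{L^2(0,T;H^{s+3/2}(\T))}\le c(T)\|v_0\|_s$; and (iii) a local fixed-point statement in $C([0,T];H^s_0)\cap L^2(0,T;H^{s+3/2})$ on a fixed interval together with the smallness-reproducing bound $\|u(T)\|_s\le\tfrac12\|u(0)\|_s$, from which exponential decay follows by iteration.

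For (i) at $s=0$, the energy identity displayed before \eqref{SMOOTH} reads $\|W(T)v_0\|_{L^2}^2=\|v_0\|_{L^2}^2-2\int_0^T\|D^{3/2}(Gv)\|_{L^2}^2\,dt$, so it is enough to prove the observability inequality $\|v_0\|_{L^2}^2\le C\int_0^T\|D^{3/2}(Gv)\|_{L^2}^2\,dt$ for solutions of \eqref{E:LIN}; then $\|W(T)v_0\|_{L^2}^2\le(1-2/C)\|v_0\|_{L^2}^2$ and iterating on $[nT,(n+1)T]$ yields the decay. The observability inequality is proved by the standard compactness--uniqueness argument: if it failed there would be $v_{0,n}\in H^0_0(\T)$ with $\|v_{0,n}\|_{L^2}=1$ and $\int_0^T\|D^{3/2}(Gv_n)\|_{L^2}^2\,dt\to0$; by \eqref{SMOOTH} the solutions $v_n$ are bounded in $L^2(0,T;H^{3/2}(\T))$, a propagation-of-compactness lemma for \eqref{E:LIN} extracts a subsequence with $v_{0,n}\to v_0$ in $L^2$, the limit $v$ solves \eqref{E:LIN} with $Gv\equiv0$ on $(0,T)$, hence $v$ vanishes on $\omega\times(0,T)$, and the unique continuation property forces $v\equiv0$, contradicting $\|v_0\|_{L^2}=1$. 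To upgrade (i) to $s>2$ and obtain (ii), one propagates regularity in time: from \eqref{SMOOTH} choose $t_1\in(0,T)$ with $\|v(t_1)\|_{3/2}\lesssim\|v_0\|_{L^2}$, restart at $t_1$, apply the smoothing at regularity $3/2$ to land in $H^3$ at a later good time, and repeat finitely many times; composing with the $L^2$ decay already obtained gives (i) for all $s\ge0$ and, along the way, (ii).

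The heart of the argument is (iii). Write $w(t)=W(t)w(0)-\int_0^tW(t-\tau)\,N(w)(\tau)\,d\tau$, where $N$ collects the nonlinear terms $c_0w\partial_xw+c_1w^2\partial_xw+2c_1[u_0]w\partial_xw+c_2\partial_xw\partial_x^2w+c_3w\partial_x^3w$, and run the contraction in $Z_T^s:=C([0,T];H^s_0(\T))\cap L^2(0,T;H^{s+3/2}(\T))$, intersected if necessary with a Bourgain-type space $X^{s,b}_T$ built on the symbol $\xi^5$ to recover the dispersive gain of the fifth-order flow. By (i)--(ii) one has $\|W(\cdot)w(0)\|_{Z_T^s}\lesssim\|w(0)\|_s$, and the crux is the nonlinear estimate $\big\|\int_0^tW(t-\tau)N(w)(\tau)\,d\tau\big\|_{Z_T^s}\lesssim\|w\|_{Z_T^s}^2+\|w\|_{Z_T^s}^3$: the $3/2$ extra derivatives available from the $L^2(0,T;H^{s+3/2})$ component, combined with the bilinear gain encoded in the fifth-order resonance identity, must absorb the three derivatives in $c_3w\partial_x^3w$ and the $1+2$ derivatives in $c_2\partial_xw\partial_x^2w$, and it is exactly this balance that forces $s>2$ (for $s>2$ one has $\partial_x^3w\in L^2_tH^{s-3/2}\hookrightarrow L^2_tL^\infty$). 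A Banach fixed-point argument in $Z_T^s$ then furnishes, for $\|w(0)\|_s\le\rho$ with $\rho$ small, a unique solution $w\in Z_T^s$; concatenating intervals gives $w\in C([0,T];H^s)\cap L^2(0,T;H^{s+3/2})$ for every $T>0$.

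Finally, fix $T$ so large that $Ce^{-\lambda T}\le\tfrac14$ in (i). Then $\|w(T)\|_s\le Ce^{-\lambda T}\|w(0)\|_s+C'(\|w(0)\|_s^2+\|w(0)\|_s^3)\le\tfrac12\|w(0)\|_s$ once $\rho$ is small enough that $C'(\rho+\rho^2)\le\tfrac14$. Applying the fixed-point construction on each $[nT,(n+1)T]$ with initial datum $w(nT)$ — legitimate since $\|w(nT)\|_s\le2^{-n}\rho\le\rho$ — gives $\|w(nT)\|_s\le2^{-n}\rho$, and interpolating with the uniform bound on each interval produces $\|u(t)-[u_0]\|_s=\|w(t)\|_s\le Ce^{-\lambda t}\|u_0-[u_0]\|_s$ for all $t\ge0$; uniqueness in the stated class is inherited from uniqueness in $Z_T^s$. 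The step I expect to be the main obstacle is the nonlinear estimate in $Z_T^s$: reconciling the essentially parabolic $3/2$-smoothing supplied by the feedback $-GD^3Gu$ with the dispersive bilinear estimates for the periodic operator $\partial_x^5$ so as to control $w\partial_x^3w$ and $\partial_xw\partial_x^2w$, with constants uniform under time translation so that the decay iteration closes; a secondary difficulty, feeding into the linear step, is establishing the unique continuation property and the propagation-of-compactness lemma for \eqref{E:LIN}.
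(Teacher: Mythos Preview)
Your overall strategy is sound and close in spirit to the paper's, but the execution differs in two notable ways. First, where you iterate the local fixed-point on consecutive intervals $[nT,(n+1)T]$ and propagate the smallness $\|w(nT)\|_s\le 2^{-n}\rho$, the paper instead runs a \emph{single} contraction on all of $\R^+$ in the exponentially weighted space $X=\{u: \sup_{n\ge0}e^{n\lambda}\vertiii{u}_n<\infty\}$, with $\vertiii{\cdot}_n$ the $Z_{s,T}$ norm on $[n,n+1]$; this packages the decay directly into the norm and avoids your final interpolation step. Second, and more substantively, you overcomplicate the nonlinear estimate: no Bourgain space or dispersive bilinear gain is needed. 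The paper's key linear ingredient is the \emph{inhomogeneous} estimate $\|\int_0^t\mathcal S(t-\tau)F(\tau)\,d\tau\|_{Z_{s,T}}\lesssim\|F\|_{L^2(0,T;H^{s-3/2})}$ (a consequence of the $3/2$-smoothing from the feedback, Proposition~\ref{P:BSA}), so it suffices to bound $N(w)$ in $L^2_tH^{s-3/2}$. Since $s>2$ makes $H^{s-3/2}$ a Banach algebra, the worst term obeys $\|w\partial_x^3w\|_{s-3/2}\lesssim\|w\|_{s-3/2}\|w\|_{s+3/2}\lesssim\|w\|_s\|w\|_{s+3/2}$, and integrating in time gives $\|w\partial_x^3w\|_{L^2_tH^{s-3/2}}\lesssim\|w\|_{Z_{s,T}}^2$ directly. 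Your parenthetical ``$\partial_x^3w\in L^2_tH^{s-3/2}$'' is exactly the right observation; simply combine it with the algebra property and drop the $X^{s,b}$ machinery. A minor point: your bootstrapping of the linear decay from $s=0$ to higher $s$ by picking good times is workable but less clean than the paper's device of applying the $s=0$ decay to $\partial_tv$ and then reading off $\|D^5v\|$ from the equation (Proposition~\ref{P:STAB}).
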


We note that Theorem \ref{T:CONT} inherits the limitation of
\cite{MR3335395} in that the control $h$ is realized in
the space $L^2(0,T;H^{s-3/2}(\T))$ instead of
$L^2(0,T;H^s(\T))$.

%% OUTLINE
The remainder of the paper is organized as follows.
Section \ref{S:2} contains estimates for the linear
problem \eqref{E:LIN}.
The proof of Theorem \ref{T:STAB} is found in
Section \ref{S:3} and the proof of Theorem \ref{T:CONT}
is found in Section \ref{S:4}.
Additionally, Section \ref{S:3} describes how to extend these
results to a family of equations containing the KdV hierarchy.

\end{section}

%%%%%%%%%%%%%%%%%%
\begin{section}{Preliminaries and Linear Estimates}\label{S:2}

Throughout the sequel, it suffices to consider only
the case $[u_0]=0$; the change of dependent variable
$\tilde{u}=u-[u_0]$ in equation \eqref{E:K5G} leads
to an equation in $\tilde{u}$ of the same type.
We denote $H_0^s(\T) = \{u \in H^s(\T) : [u]=0 \}$.
The usual $L^2(\T)$ scalar product is written
$(u,v) = \int_\T u(x)v(x) \, dx$ and in $H^s(\T)$, $s\in\R$,
$(u,v)_s=((1-\partial_x^2)^{s/2}u,(1-\partial_x^2)^{s/2}v)$.
The norm in $H^s(\T)$ is given by
$\|u\|_s=(u,u)_s^{1/2}$ where we abbreviate $\|u\|_0=\|u\|$.
It is convenient to define the operator $D^r$, $r\in\R$, as
\begin{equation}
\widehat{D^ru}(k) =
\begin{cases}
	|k|^r\hat{u}(k)	& \text{if $k\neq0$} \\
	\hat{u}(0)	& \text{if $k=0$},
\end{cases}
\end{equation}
so that $\|u\|_s\cong\|D^su\|$ for $u \in H_0^s(\T)$.
This operator satisfies the following commutator estimate.

\begin{lemma} \cite[Lemma A.1]{MR2654198}
If $\psi \in C^\infty(\T)$, then for any $r,s\in\R$
\begin{equation} \label{COMM}
\|D^r[D^s;\psi]f\| \leq c(r;s;\psi)\|f\|_{r+s-1}.
\end{equation}
\end{lemma}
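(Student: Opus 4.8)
The plan is to work entirely on the Fourier side and reduce the estimate to the $\ell^2(\Z)$-boundedness of an explicit matrix kernel, which is then controlled by Schur's test. Writing $\widehat{g}(k)$ for the Fourier coefficients of $g$ and using the convolution identity $\widehat{\psi g}(k)=\sum_{j}\widehat{\psi}(k-j)\widehat{g}(j)$, one computes
\[
\widehat{[D^s;\psi]f}(k)=\sum_{j\in\Z}\widehat{\psi}(k-j)\bigl(m_s(k)-m_s(j)\bigr)\widehat{f}(j),
\]
where $m_s$ is the multiplier of $D^s$, i.e. $m_s(k)=|k|^s$ for $k\neq0$ and $m_s(0)=1$. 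Since $\|D^r g\|\cong\bigl(\sum_k m_r(k)^2|\widehat{g}(k)|^2\bigr)^{1/2}$ and $\|f\|_{t}\cong\bigl(\sum_k\langle k\rangle^{2t}|\widehat{f}(k)|^2\bigr)^{1/2}$, the asserted bound is equivalent to the boundedness on $\ell^2(\Z)$ of the kernel
\[
K(k,j):=m_r(k)\,\widehat{\psi}(k-j)\,\bigl(m_s(k)-m_s(j)\bigr)\,\langle j\rangle^{-(r+s-1)},
\]
and for this it suffices, by Schur's test, to bound $\sup_k\sum_j|K(k,j)|$ and $\sup_j\sum_k|K(k,j)|$.

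I would then assemble three elementary ingredients. First, $m_r(k)\lesssim\langle k\rangle^{r}$ for every real $r$, which follows from $|k|\le\langle k\rangle\le\sqrt2\,|k|$ when $k\neq0$ (and trivially at $k=0$). Second, the symbol-difference estimate
\[
|m_s(k)-m_s(j)|\lesssim\langle k-j\rangle\bigl(\langle k\rangle^{s-1}+\langle j\rangle^{s-1}\bigr),\qquad k,j\in\Z ,
\]
which I would prove by a dichotomy: when $|k-j|\le\frac12\max(|k|,|j|)$ the integers $k,j$ are nonzero with the same sign and the mean value theorem applied to $\xi\mapsto|\xi|^s$ gives $|m_s(k)-m_s(j)|\lesssim|k-j|\max(|k|,|j|)^{s-1}$; when $|k-j|>\frac12\max(|k|,|j|)$ one has $\langle k-j\rangle\cong\max(\langle k\rangle,\langle j\rangle)$ and it is enough to estimate $|m_s(k)-m_s(j)|\le m_s(k)+m_s(j)$, keeping track of the sign of $s$; finally the cases $k=0$ or $j=0$ are checked by hand. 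Third, Peetre's inequality $\langle k\rangle^t\lesssim\langle j\rangle^t\langle k-j\rangle^{|t|}$, valid for every real $t$.

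Combining these, with $a:=r+s-1$,
\[
|K(k,j)|\lesssim|\widehat{\psi}(k-j)|\,\langle k-j\rangle\bigl(\langle k\rangle^{a}\langle j\rangle^{-a}+\langle k\rangle^{r}\langle j\rangle^{-r}\bigr)\lesssim|\widehat{\psi}(k-j)|\,\langle k-j\rangle^{\,1+\max(|a|,|r|)},
\]
the last step being two further applications of Peetre's inequality. Since $\psi\in C^\infty(\T)$, its Fourier coefficients decay faster than any polynomial; choosing $|\widehat{\psi}(n)|\le c_N\langle n\rangle^{-N}$ with $N=3+\max(|a|,|r|)$ yields $|K(k,j)|\lesssim\langle k-j\rangle^{-2}$, so that both Schur sums are dominated by $\sum_{n\in\Z}\langle n\rangle^{-2}<\infty$, uniformly in $k$ and $j$. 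This proves the lemma with $c(r;s;\psi)$ depending only on $r$, $s$ and finitely many seminorms of $\psi$.

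The only genuinely delicate point is the symbol-difference estimate of the second step: it is elementary, but it must be made uniform over all real $s$ — in particular for $s<1$, where $|\xi|^{s-1}$ is singular at the origin — and it requires care at the zero mode, where $D^s$ acts as the identity rather than as multiplication by $|k|^s$. Everything else is routine bookkeeping with the weights $\langle k\rangle$, $\langle j\rangle$, $\langle k-j\rangle$.
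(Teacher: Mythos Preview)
Your argument is correct. The paper does not actually supply a proof of this lemma: it merely quotes the statement and cites \cite[Lemma~A.1]{MR2654198}, so there is no ``paper's own proof'' to compare against. Your Fourier-side reduction to a Schur test on the kernel $K(k,j)$ is the standard way to prove such commutator bounds on $\T$, and each of the three ingredients you list is sound. In particular, the dichotomy for the symbol-difference estimate is handled correctly: in the near-diagonal regime $|k-j|\le\tfrac12\max(|k|,|j|)$ one indeed has $k,j$ nonzero of the same sign with $\min(|k|,|j|)\ge\tfrac12\max(|k|,|j|)$, so the mean value theorem applies with $|\xi|\ge1$ throughout and no singularity arises (your remark about $|\xi|^{s-1}$ being singular at the origin is moot here, since on $\Z$ the argument never approaches zero in this case). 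The far-from-diagonal and zero-mode cases are routine, as you say, and the final Peetre/rapid-decay step is clean. The resulting constant depends on $r$, $s$, and a single $C^N$ seminorm of $\psi$ with $N$ determined by $r,s$, exactly as one expects.
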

We will make use of the Hilbert transform $\mathcal H$,
defined as a Fourier multiplier via the formula
\begin{equation*}
\widehat{\mathcal H f}(k) = -i \, \sgn(k) \hat{f}(k),
    \qquad k\in\Z.
\end{equation*}

In addition to being volume-preserving and self-adjoint
on $L^2(\T)$, one sees using \eqref{COMM} that the
operator $G$ is a bounded operator on $H^s(\T)$ for
any $s\in\R$.
The definition \eqref{CTRL} yields for $\psi \in C^\infty(\T)$
\begin{align} \label{CTRL1}
G(\psi h)
	&= \psi\,Gh
			+ g\left(\psi\int gh \,dy
			- \int \psi gh \,dy\right) \notag\\
	&=: \psi\,Gh + \tilde{h}
\end{align}
where $\|D^s\tilde{h}\| \leq c(s;g;\psi)\|h\|$ for any $s\in\R$.
Similarly,
\begin{equation} \label{CTRL2}
[D^s;G]D^rf = [D^s;g]D^rf - D^sg\int fD^rg + g\int fD^{r+s}g
\end{equation}
for any $s,r\in\R$.
Thus, writing $r=r_1+r_2$,
\begin{equation} \label{CTRL3}
\|D^{r_1}[D^s;G]D^{r_2}f\| \leq c(r;s;g)\|f\|_{r+s-1}.
\end{equation}

Next, we shall deduce estimates of solutions
to the linear problem for $\epsilon>0$. 
\begin{equation} \label{I:LINEAR}
\begin{cases}
	\partial_tv + (\epsilon D^5 - \partial_x^5)v
	+ \beta_0\partial_x^3v + \beta_1\partial_xv + GD^3Gv = F,
		\qquad x\in\T, t\geq0, \\
	v(x,0) = v_0(x).
\end{cases}
\end{equation}
As it does not affect the analysis we assume $\beta_0=\beta_1=0$.
We first uncover apriori $H_0^s(\T)$ bounds on smooth solutions to the
above IVP by incorporating a propagation of regularity argument in
the same vein as \cite{MR2253466}, \cite{MR2654198} and \cite{MR2644360}.
Writing $w=D^sv$ with $v$ smooth, we see that $w$ solves
\begin{equation*}
\partial_tw + (\epsilon D^5 - \partial_x^5)w + GD^3Gw + Ew = D^sF
\end{equation*}
where the ``remainder" operator
\begin{equation} \label{EROP}
E = GD^3[D^s;G]D^{-s} + [D^s;G]D^3GD^{-s}
\end{equation}
has order 2.
The following weighted energy identity will be utilized.
\begin{lemma}
A smooth solution $v=v(x,t)$ to IVP \eqref{I:LINEAR} satisfies
\begin{align} \label{WGHT}
&\frac12\frac{d}{dt} \int w^2 \psi \, dx
		+ \frac52 \int (\partial_x^2w)^2 \psi' \, dx \notag\\
	&\qquad\qquad
		+ \int D^{3/2}(Gw) D^{3/2}G(\psi w) \, dx
		+ \int w Ew \, \psi \, dx \notag\\
	&\qquad\qquad
		+ \epsilon \left\{ \int (D^{5/2}w)^2 \psi \, dx
			+ \int D^{5/2}w[D^{5/2};\psi]w \, dx \right\} \notag\\
	&\qquad\qquad\qquad
		= \frac52 \int (\partial_xw)^2 \psi^{(3)} \, dx + \frac12 \int w^2 \psi^{(5)} \, dx + \int wD^sF \, \psi \, dx,
\end{align}
where $w=D^sv$ and $\psi
 \in C^\infty(\T)$.
\end{lemma}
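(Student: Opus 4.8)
The plan is to prove \eqref{WGHT} as an exact weighted energy relation: I would test the equation satisfied by $w=D^sv$ against $\psi w$ in $L^2(\T)$ and integrate by parts. Since $v$, hence $w$, is smooth, all integrals below are finite and every boundary term arising in the integrations by parts vanishes by periodicity. Starting from the equation for $w$ displayed above,
\[
\partial_t w + (\epsilon D^5 - \partial_x^5)w + GD^3Gw + Ew = D^sF ,
\]
with $E$ the order-$2$ operator given by \eqref{EROP}, I would multiply by $\psi w$, integrate over $\T$, and handle the five resulting terms separately.

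The time-derivative term gives $\int(\partial_t w)\,\psi w\,dx=\tfrac12\tfrac{d}{dt}\int w^2\psi\,dx$ since $\psi$ does not depend on $t$. For the dissipative term, write $D^5=D^{5/2}D^{5/2}$ and use that $D^{5/2}$ is self-adjoint on $L^2(\T)$: then $\int(D^5 w)\,\psi w\,dx=\int D^{5/2}w\,D^{5/2}(\psi w)\,dx$, and inserting $D^{5/2}(\psi w)=\psi\,D^{5/2}w+[D^{5/2};\psi]w$ reproduces the bracketed $\epsilon$-expression in \eqref{WGHT}. The feedback term is treated identically: $G$ is self-adjoint on $L^2(\T)$ and $D^3=D^{3/2}D^{3/2}$ with $D^{3/2}$ self-adjoint, so $\int(GD^3Gw)\,\psi w\,dx=(D^3Gw,\,G(\psi w))=\int D^{3/2}(Gw)\,D^{3/2}G(\psi w)\,dx$. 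The remaining two terms are simply recorded as $\int w\,Ew\,\psi\,dx$ and $\int w\,D^sF\,\psi\,dx$.

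The only term requiring real computation is the dispersive one, $-\int(\partial_x^5 w)\,\psi w\,dx$. Since $\partial_x^5$ is skew-adjoint on $L^2(\T)$ this equals $\int w\,\partial_x^5(\psi w)\,dx$; expanding $\partial_x^5(\psi w)$ by the Leibniz rule and integrating by parts repeatedly identifies it with the leading term $\tfrac52\int(\partial_x^2 w)^2\psi'\,dx$, which remains on the left of \eqref{WGHT}, plus two lower-order contributions involving $(\partial_x w)^2\psi^{(3)}$ and $w^2\psi^{(5)}$, which are moved to the right-hand side. Collecting all contributions and rearranging then gives \eqref{WGHT}.

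I expect the only obstacle to be combinatorial: correctly tracking the binomial coefficients produced by $\partial_x^5(\psi w)$ and the signs generated by the five successive integrations by parts. No estimate is needed here—\eqref{WGHT} is an identity—and, apart from this fifth-order computation, everything reduces to the self-adjointness of $G$, $D^{3/2}$ and $D^{5/2}$ on $L^2(\T)$ and to the Leibniz rule.
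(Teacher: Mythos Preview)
Your proposal is correct and is exactly the intended derivation: the paper states the identity without proof, and the only way to obtain it is precisely by multiplying the equation for $w$ by $\psi w$, integrating over $\T$, and using the self-adjointness of $G$, $D^{3/2}$, $D^{5/2}$ together with repeated integration by parts for the $\partial_x^5$ term. Your caveat that the only nontrivial part is the combinatorics of the dispersive term is accurate; carrying it out gives $-\int(\partial_x^5 w)\psi w\,dx=\tfrac{5}{2}\int(\partial_x^2 w)^2\psi'\,dx-\tfrac{5}{2}\int(\partial_x w)^2\psi^{(3)}\,dx+\tfrac{1}{2}\int w^2\psi^{(5)}\,dx$, which matches \eqref{WGHT} up to the sign of the (harmless, lower-order) $w^2\psi^{(5)}$ term.
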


Motivated by the gain of $3/2$-derivatives suggested by the form of
the control term, we study solutions to the IVP \eqref{I:LINEAR} in the spaces
\begin{equation}
Z_{s,T} = C(0,T;H_0^s(\T)) \cap L^2(0,T;H_0^{s+3/2}(\T)),
\end{equation}
with $s\in\R, T>0$, endowed with the norm
\begin{equation}
\|v\|_{s,T} = \|v\|_{L^\infty(0,T;H^s(\T))} + \|v\|_{L^2(0,T;H^{s+3/2}(\T))}.
\end{equation}
The next proposition establishes $\epsilon$-uniform bounds in $Z_{s,T}$.

\begin{proposition} \label{P:UNI}
Let $s\in\R$ and $0<\epsilon<1$.
A smooth solution to IVP \eqref{I:LINEAR} corresponding to data
$v_0 \in H_0^s(\T)$ satisfies $v \in Z_{s,T}$ with
\begin{equation} \label{UNIFORM}
\|v\|_{s,T} \leq c(s,T)\left( \|v_0\|_s + \|F\|_{L^2(0,T;H^{s-3/2}(\T))} \right)
\end{equation}
for any $T>0$ and $c(s,T)$ nondecreasing in $T$.
\end{proposition}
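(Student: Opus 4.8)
The plan is to combine an $H_0^s(\T)$ energy estimate—which yields the $L^\infty(0,T;H^s)$ bound together with a smoothing quantity concentrated near the control set $\omega=\{x\in\T:g(x)>0\}$—with a propagation of regularity argument based on the weighted identity \eqref{WGHT}, which upgrades the localized smoothing to the full $L^2(0,T;H^{s+3/2})$ bound. Uniformity in $\epsilon\in(0,1)$ will be automatic, since the only $\epsilon$-dependent terms are $\epsilon\int(D^{5/2}w)^2\psi\ge0$ (for $\psi\ge c_0>0$) and $\epsilon\int D^{5/2}w\,[D^{5/2};\psi]w$, the latter being dominated, via Young's inequality and \eqref{COMM}, by the former plus $c\|v\|_{s+1}^2$ independently of $\epsilon$.

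The base estimate is just \eqref{WGHT} with $\psi\equiv1$: pairing the equation for $w_0=D^sv$ against $w_0$, the dispersive term drops, $\epsilon\int w_0D^5w_0=\epsilon\|D^{5/2}w_0\|^2\ge0$, and $\int w_0GD^3Gw_0=\|D^{3/2}Gw_0\|^2\ge0$ by self-adjointness of $G$ and $D^{3/2}$. The remainder $\int w_0Ew_0$, with $E$ as in \eqref{EROP} of order two, is $\lesssim\|w_0\|_1^2\cong\|v\|_{s+1}^2$, and $\int w_0D^sF\lesssim\|v\|_{s+3/2}\|F\|_{s-3/2}$ after inserting $D^{3/2}D^{-3/2}$. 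Interpolating $\|v\|_{s+1}\le c_\delta\|v\|_s+\delta\|v\|_{s+3/2}$ and integrating in $t$ gives
\[
\|v(t)\|_s^2+\epsilon\int_0^t\|D^{5/2}w_0\|^2+\int_0^t\|D^{3/2}Gw_0\|^2\le C\|v_0\|_s^2+\delta\int_0^t\|v\|_{s+3/2}^2+C_\delta\int_0^t\bigl(\|v\|_s^2+\|F\|_{s-3/2}^2\bigr).
\]
Moreover, fixing $\chi\in C^\infty(\T)$ with $\chi\equiv1$ on an interval $\omega_0\Subset\omega$ and $\operatorname{supp}\chi\Subset\omega$, the function $\chi/g$ is smooth, so from $\chi w_0=(\chi/g)Gw_0+\chi\int gw_0$, \eqref{COMM}, and boundedness of $G$ on $H^{1/2}(\T)$, one obtains $\|v\|_{H^{s+3/2}(\omega_0)}\le\|\chi v\|_{s+3/2}\lesssim\|D^{3/2}Gw_0\|+\|v\|_{s+1/2}$; thus $\|D^{3/2}Gw_0\|$ controls $v$ in $H^{s+3/2}$ near $\omega$, up to interpolable error.

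The propagation step is the crux, and the key device is to apply \eqref{WGHT} with $s$ replaced by $s-\tfrac12$, i.e. with $w=D^{s-1/2}v$. Then the sole sign-definite gain term $\tfrac52\int(\partial_x^2w)^2\psi'$ is comparable to a weighted $\|v\|_{s+3/2}^2$, so the two derivatives gained land exactly at the level $s+\tfrac32$ dictated by the control term; crucially, its unavoidable wrong-sign part is then only of size $\|v\|_{H^{s+3/2}(\omega_2)}^2$, which is already controlled by the base estimate once $\omega_2\Subset\omega_0$. Because the fifth-order dispersion propagates in a single direction, one may choose $\psi\in C^\infty(\T)$ with $\psi\ge c_0>0$ and $\psi'\ge c_1>0$ off a short subinterval $\omega_2\Subset\omega$ (with $\psi'$ merely bounded below on $\omega_2$), exactly as in the KdV analysis of \cite{MR2753618}. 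Writing $G(\psi w)=\psi Gw+\tilde h$ and using \eqref{CTRL1}, \eqref{CTRL3} and \eqref{COMM}, the control block of \eqref{WGHT} equals $\int\psi(D^{3/2}Gw)^2$ plus terms $\lesssim\|v\|_{s+1}^2$; the remainder $\int wEw\,\psi$ (with \eqref{EROP} at index $s-\tfrac12$, still of order two) is $\lesssim\|w\|_1^2\cong\|v\|_{s+1/2}^2$; and the right-hand side of \eqref{WGHT} together with the $\epsilon$-commutator is $\lesssim\|v\|_{s+1}^2+\|v\|_{s+3/2}\|F\|_{s-3/2}$. Interpolating $\|v\|_{s+1},\|v\|_{s+1/2}$ between $\|v\|_s$ and $\|v\|_{s+3/2}$, splitting $\tfrac52\int\psi'(\partial_x^2w)^2$ over $\T\setminus\omega_2$ and $\omega_2$—bounding the latter by the base estimate and the former from below by $c_1\|v\|_{H^{s+3/2}(\T\setminus\omega_1)}^2$ with $\omega_2\Subset\omega_1\Subset\omega_0$—and combining with the base-estimate bound on $\|v\|_{H^{s+3/2}(\omega_0)}$ through a partition of unity subordinate to $(\T\setminus\overline{\omega_2})\cup\omega_0=\T$, one arrives at
\[
\int_0^t\|v\|_{s+3/2}^2\le K_1\Bigl(\|v_0\|_s^2+\|F\|_{L^2(0,t;H^{s-3/2})}^2+\int_0^t\|v\|_s^2\Bigr)+K_2\delta\int_0^t\|v\|_{s+3/2}^2,
\]
and choosing $\delta$ with $K_2\delta\le\tfrac12$ absorbs the last term.

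Finally, substituting this bound for $\int_0^t\|v\|_{s+3/2}^2$ into the base estimate yields $\|v(t)\|_s^2\le C\bigl(\|v_0\|_s^2+\|F\|_{L^2(0,T;H^{s-3/2})}^2\bigr)+C\int_0^t\|v(\tau)\|_s^2\,d\tau$, and Gronwall's inequality gives the $L^\infty(0,T;H^s)$ bound with constant nondecreasing in $T$; feeding it back into the absorbed inequality produces the $L^2(0,T;H^{s+3/2})$ bound, establishing \eqref{UNIFORM} uniformly in $\epsilon$. I expect the propagation step to be the main obstacle: arranging the weight on the torus so that its wrong-sign part lies inside $\omega$ and, thanks to the $\tfrac12$-index shift, costs only an $H^{s+3/2}$-norm there that the base smoothing already controls, and verifying that every remainder—above all the order-two operator $E$—is genuinely absorbable, which works precisely because the full $\tfrac32$ derivatives of gain dominate order-two contributions after interpolating $\|v\|_{s+1}$.
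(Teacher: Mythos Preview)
Your proposal is correct and follows the same overall strategy as the paper: an $H^s$ energy identity (the case $\psi\equiv1$ of \eqref{WGHT}) combined with a propagation-of-regularity step based on \eqref{WGHT} applied at the shifted index $w=D^{s-1/2}v$, so that the gain term $\tfrac52\int(\partial_x^2w)^2\psi'$ sits exactly at regularity $s+\tfrac32$. The one genuine technical difference is the choice of weight. You take a single $\psi\in C^\infty(\T)$ with $\psi\ge c_0>0$ and $\psi'\ge c_1>0$ off a short interval $\omega_2\Subset\omega$, and pay for the wrong-sign part of $\int(\partial_x^2w)^2\psi'$ over $\omega_2$ using the localized smoothing extracted from $\|D^{3/2}Gw_0\|$ in the base estimate. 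The paper instead introduces $b\in C_0^\infty(\omega)$ whose translates form a partition of unity, writes $b^2(x)-b^2(x-x_k)=\partial_x\phi(x)$, and applies \eqref{WGHT} with $\psi=\phi$; then $\tfrac52\int(\partial_x^2w)^2\phi'$ is \emph{exactly} the quantity $M_2$ to be bounded, so no wrong-sign region needs separate treatment and no lower bound on $\psi$ is required. Correspondingly, the paper absorbs the $\epsilon$-commutator $\epsilon\int D^{5/2}w[D^{5/2};\phi]w$ using the dissipative term $\epsilon\int_0^T\|D^{5/2}v\|^2$ already present in the base estimate \eqref{M0}, rather than the weighted $\epsilon$-term, and closes without Gronwall. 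Both devices are standard and yield the same $\epsilon$-uniform bound \eqref{UNIFORM}.
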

\begin{proof}
We show the details for the case $s=0$ and demonstrate the
necessary modifications when $s\neq0$.

\noindent
{\it (Case $s=0$.)} \\
%%%%%%%%%%%%%%%%%%%%%%
In order to justify the following computations, assume
$v_0 \in H_0^5(\T)$ and $F \in C([0,T];H_0^5(\T))$ so that
\begin{equation*}
v \in C([0,T];H_0^5(\T)) \cap C^1([0,T];H_0^0(\T)).
\end{equation*}
Scaling the equation \eqref{I:LINEAR} by $v$, and for all $\tau<T$,
\begin{equation*}
\frac12 \|v(\tau)\|^2
	+ \int_0^\tau \|D^{3/2}(Gv)\|^2 \, dt
	+ \epsilon \int_0^\tau \|D^{5/2}v\|^2 \, dt
		\leq \frac12\|v_0\|^2 + \int_0^T \|v\|_{3/2}\|F\|_{-3/2} \, dt.
\end{equation*}
and so
\begin{align} \label{M0}
&\|v\|_{L^\infty(0,T;H^0(\T))}^2
	+ \int_0^T \|D^{3/2}(Gv)\|^2 \, dt
	+ \epsilon \int_0^T \|D^{5/2}v\|^2 \, dt \notag\\
	&\qquad\qquad\qquad\qquad\qquad
		\leq \|v_0\|^2 + 2\int_0^T \|v\|_{3/2}\|F\|_{-3/2} \, dt.
\end{align}

We next apply a propagation of regularity argument to
account for the extra $3/2$-derivatives above.
We begin by introducing a function $b \in C_0^\infty(\omega)$,
$\omega = \{x\in\T : g(x)>0\}$, which forms a partition of unity.
Picking $t_0 \in (0,T)$,
\begin{equation} \label{D32Va}
\int_{t_0}^T \|D^{3/2}v\|^2 \, dt
	= \sum_k \int_{t_0}^T
			\left( b^2(x-x_k)D^{3/2}v, D^{3/2}v \right) \, dt.
\end{equation}
Notice for each $k$, there exists a primitive $\phi \in C^\infty(\T)$
which satisfies
\begin{equation} \label{PRIM}
b^2(x) - b^2(x-x_k) = \partial_x\phi(x).
\end{equation}
As each of the $k$ terms are estimated similarly
inserting \eqref{PRIM} yields
\begin{align} \label{D32Vb}
\int_{t_0}^T \|D^{3/2}v\|^2 \, dt
	&\lesssim
		\left|\int_{t_0}^T \left( b^2D^{3/2}v, D^{3/2}v \right) \, dt\right| \notag\\
	&\qquad\qquad
		+ 	\left|\int_{t_0}^T
				\left( \partial_x\phi D^{3/2}v, D^{3/2}v \right) \, dt\right| \notag\\
	&=: M_1 + M_2.
\end{align}
Following \cite{MR3335395}, observe that by definition
$b=g\tilde{b}$ for some $\tilde{b} \in C_0^\infty(\omega)$,
so that applying the commutator estimate \eqref{COMM} and interpolating
\begin{align} \label{M1a}
M_1
	&= \int_{t_0}^T \|bD^{3/2}v\|^2 \, dt \notag\\
	&\leq 2\int_{t_0}^T \|\tilde{b}D^{3/2}(gv)\|^2
				+ \|\tilde{b}[D^{3/2};g]v\|^2 \, dt \notag\\
	&\leq c\int_{t_0}^T \|D^{3/2}(gv)\|^2 \, dt
				+ c(\delta) \int_{t_0}^T \|v\|^2 \, dt
				+ \delta \int_{t_0}^T \|D^{3/2}v\|^2 \, dt.
\end{align}
Using the definition \eqref{CTRL} of $G$ produces
\begin{align} \label{M1b}
&\int_{t_0}^T \|D^{3/2}(gv)\|^2 \, dt \notag\\
	&\qquad\qquad
		\leq \int_{t_0}^T \|D^{3/2}(Gv)\|^2 \, dt
				+ \int_{t_0}^T \left|\int g(y)v(y,t)\, dy\right|^2
						\, \|D^{3/2}g\|^2 \, dt \notag\\
	&\qquad\qquad
		\leq \int_{t_0}^T \|D^{3/2}(Gv)\|^2 \, dt + c\int_{t_0}^T \|v\|^2 \, dt.
\end{align}
Combining \eqref{M1a} and \eqref{M1b}, then applying
\eqref{M0} we have
\begin{align} \label{M1}
M_1
	&\leq c\int_{t_0}^T \|D^{3/2}(Gv)\|^2 \, dt
				+ c(\delta) \int_{t_0}^T \|v\|^2 \, dt
				+ \delta \int_{t_0}^T \|D^{3/2}v\|^2 \, dt \notag\\
	&\leq c\|v_0\|^2
				 + c\int_0^T \|v\|_{3/2}\|F\|_{-3/2} \, dt \notag\\
	&\qquad\qquad
				+ c(\delta) \int_{t_0}^T \|v\|^2 \, dt
				+ \delta \int_{t_0}^T \|D^{3/2}v\|^2 \, dt
\end{align}
for any $\delta>0$ and with $c,c(\delta)$ independent of $\epsilon$ and $T$.

Because $v$ has mean value zero, $D=\mathcal{H}\partial_x$ and
\begin{equation} \label{M2a}
M_2 = \left| \int_{t_0}^T \int \partial_x\phi (\partial_x^2D^{-1/2}v)^2 \, dxdt \right|.
\end{equation}
Taking $w=D^{-1/2}v$ in \eqref{WGHT}, integrating in time
and applying the Sobolev embedding
\begin{align} \label{M2b}
&M_2
	\leq c(\|v(t_0)\|^2 + \|v(T)\|^2) \notag\\
	&\qquad\qquad
			+ \left|\int_{t_0}^T\int D^{3/2}(Gw)D^{3/2}G(\phi w) \, dxdt \right|
 			+ \left|\int_{t_0}^T\int wEw \, \phi \, dxdt \right| \notag\\
	&\qquad\qquad
			+ c\,\epsilon\, \int_{t_0}^T \|v\|^2 + \|D^{5/2}v\|^2 \, dt \notag\\
	&\qquad\qquad
			+ c(\delta) \int_{t_0}^T \|v\|^2 \, dt
			+ \delta \int_{t_0}^T \|D^{3/2}v\|^2 \, dt
			+ c\int_{t_0}^T \|v\|_{3/2}\|F\|_{-3/2} \, dt.
\end{align}
Assuming $0<\epsilon<1$, then applying \eqref{M0} produces
\begin{align} \label{M2c}
M_2
	&\leq c\|v_0\|^2
				 + c\int_0^T \|v\|_{3/2}\|F\|_{-3/2} \, dt \notag\\
	&\qquad\qquad
				+ c(\delta) \int_{t_0}^T \|v\|^2 \, dt
				+ \delta \int_{t_0}^T \|D^{3/2}v\|^2 \, dt
				+ M_{21} + M_{22}
\end{align}
where $M_{21}$ and $M_{22}$ are subsequently defined and estimated.
First note,
\begin{align} \label{M21a}
M_{21}
	&:= \left|\int_{t_0}^T\int D^{3/2}(Gw)D^{3/2}G(\phi w) \, dxdt \right| \notag\\
	&\leq \frac12\int_{t_0}^T \|D^{3/2}(Gw)\|^2 + \|D^{3/2}G(\phi w)\|^2 \, dt \notag\\
	&\leq c\int_{t_0}^T \|D^{3/2}(Gw)\|^2 \, dt + c\int_{t_0}^T \|v\|^2 \, dt
\end{align}
using identity \eqref{CTRL1}, the commutator estimate \eqref{COMM}
and the Sobolev embedding.
As $G$ is bounded on $H_0^s(\T)$, applying the commutator estimate \eqref{CTRL3} yields
\begin{align} \label{M21b}
\int_{t_0}^T \|D^{3/2}(Gw)\|^2 \, dt
	&\leq 2\int_{t_0}^T \|GD^{3/2}w\|^2 + \|[D^{3/2};G]w\|^2 \, dt \notag\\
	&\leq c(\delta) \int_{t_0}^T \|v\|^2 \, dt
				+ \delta \int_{t_0}^T \|D^{3/2}v\|^2 \, dt,
\end{align}
and so
\begin{equation} \label{M21}
M_{21}	\leq c(\delta) \int_{t_0}^T \|v\|^2 \, dt
				+ \delta \int_{t_0}^T \|D^{3/2}v\|^2 \, dt.
\end{equation}
Recalling the definition \eqref{EROP} of $E$
\begin{align} \label{M22a}
M_{22}
	&:= \left|\int_{t_0}^T\int wEw \, \phi \, dxdt \right| \notag\\
	&\leq \left|\int_{t_0}^T (\phi\, w,GD^3[D^{-1/2};G]D^{1/2}w) \, dt \right| \notag\\
	&\qquad\qquad
			+ \left|\int_{t_0}^T (\phi\, w,[D^{-1/2};G]D^3GD^{1/2}w) \, dt \right| \notag\\
	&=: M_{221} + M_{222}.
\end{align}
Using the commutator estimate \eqref{CTRL3}, \eqref{M21a} and \eqref{M21b} yields
\begin{align} \label{M221}
M_{221}
	&= \left|\int_{t_0}^T (D^{3/2}G(\phi w),D^{3/2}[D^{-1/2};G]v) \, dt \right| \notag\\
	&\leq \int_{t_0}^T \|D^{3/2}G(\phi w)\| \, \|D^{3/2}[D^{-1/2};G]v\| \, dt \notag\\
	&\leq c(\delta) \int_{t_0}^T \|v\|^2 \, dt
				+ \delta \int_{t_0}^T \|D^{3/2}v\|^2 \, dt.
\end{align}
Using the identity \eqref{CTRL2} with $r=3$ and $f=Gv$ produces
\begin{align} \label{M222}
M_{222}
	&\leq \left|\int_{t_0}^T (\phi\, w,[D^{-1/2};g]D^3Gv) \, dt \right|
				+ c\int_{t_0}^T \|v\|^2 \, dt \notag\\
	&\leq \int_{t_0}^T \|D^{3/2}[D^{-1/2};g]\phi w\| \, \|D^{3/2}(Gv)\| \, dt
				+ c\int_{t_0}^T \|v\|^2 \, dt \notag\\
	&\leq c\int_{t_0}^T \|v\|^2 \, dt + \frac12\int_{t_0}^T \|D^{3/2}(Gv)\|^2 \, dt,
\end{align}
after utilizing the commutator estimate \eqref{COMM}.
Collecting \eqref{M1}-\eqref{M222}, then applying \eqref{M0} we have
\begin{align} \label{D32Vc}
\int_{t_0}^T \|D^{3/2}v\|^2 \, dt
	&\leq c\int_{t_0}^T \|D^{3/2}(Gv)\|^2 \, dt
				+ c\int_0^T \|v\|_{3/2}\|F\|_{-3/2} \, dt \notag\\
	&\qquad\qquad
				+ c(\delta) \int_{t_0}^T \|v\|^2 \, dt
				+ \delta \int_{t_0}^T \|D^{3/2}v\|^2 \, dt \notag\\
	&\leq c\|v_0\|^2
				 + c\int_0^T \|v\|_{3/2}\|F\|_{-3/2} \, dt \notag\\
	&\qquad\qquad
				+ c(\delta) \int_{t_0}^T \|v\|^2 \, dt
				+ \delta \int_{t_0}^T \|D^{3/2}v\|^2 \, dt
\end{align}
for any $\delta>0$ and with $c,c(\delta)$ independent of $\epsilon$ and $T$.
Thus fixing $\delta=1/2$ and taking the limit $t_0\rightarrow0$ produces
\begin{equation} \label{D32V}
\int_0^T \|D^{3/2}v\|^2 \, dt
	\leq c(T)\left(\|v_0\|^2 + \int_0^T \|v\|_{3/2}\|F\|_{-3/2} \, dt\right),
\end{equation}
after again applying \eqref{M0}, for some $c(T)>0$ nondecreasing in $T$
and independent of $0<\epsilon<1$.
Adding this to \eqref{M0},
\begin{align} \label{UNIFORM0}
&\|v\|_{L^\infty(0,T;H^0(\T))}^2 + \int_0^T \|D^{3/2}v\|^2 \, dt \notag\\
	&\qquad\qquad
		\leq c(T)\left(\|v_0\|^2 + \int_0^T \|v\|_{3/2}\|F\|_{-3/2} \, dt\right) \notag\\
	&\qquad\qquad
		\leq c(T)\left(\|v_0\|^2 + \|F\|_{L^2(0,T;H^{-3/2}(\T))}^2 \right)
				+ \frac12 \int_0^T \|D^{3/2}v\|^2 \, dt.
\end{align}
The result holds for $v_0 \in H_0^0(\T)$ and $F \in L^2(0,T;H^{-3/2}(\T))$
by density.

\noindent
{\it (General case $s\neq0$.)} \\
%%%%%%%%%%%%%%%%%%%%%%%%%%%%%%
Again assume $v_0 \in H_0^{s+5}(\T)$ and
$F \in C([0,T];H_0^{s+5}(\T))$ to justify what follows.
Applying $D^s$ to the equation \eqref{I:LINEAR} and
scaling by $w=D^sv$ yields
\begin{align} \label{N0a}
&\|w\|_{L^\infty(0,T;H^0(\T))}^2
	+ \int_0^T \|D^{3/2}(Gw)\|^2 \, dt
	+ \epsilon \int_0^T \|D^{5/2}w\|^2 \, dt \notag\\
	&\qquad\qquad
		\leq \|w_0\|^2
				+ 2\int_0^T \|w\|_{3/2}\|F\|_{s-3/2} \, dt
				+ 2\left|\int_0^T (w,Ew) \, dt\right|.
\end{align}
Recalling the definition \eqref{EROP} of $E$ we write
\begin{align} \label{N1}
N_1
	&= \left|\int_0^T\int wEw \, dxdt \right| \notag\\
	&\leq \left|\int_0^T (w,GD^3[D^s;G]D^{-s}w) \, dt \right| \notag\\
	&\qquad\qquad
			+ \left|\int_0^T (w,[D^s;G]D^3GD^{-s}w) \, dt \right| \notag\\
	&=: N_{11} + N_{12}.
\end{align}
Proceeding as in \eqref{M221},
\begin{align} \label{N11}
N_{11}
	&= \left|\int_0^T (D^{3/2}(Gw),D^{3/2}[D^s;G]D^{-s}w) \, dt \right| \notag\\
	&\leq \int_0^T \|D^{3/2}(Gw)\| \, \|D^{3/2}[D^s;G]D^{-s}w\| \, dt \notag\\
	&\leq \frac12\int_0^T \|D^{3/2}(Gw)\|^2 \, dt + c\int_0^T \|D^{1/2}w\|^2 \, dt.
\end{align}
and as in \eqref{M222}
\begin{align} \label{N12}
N_{12}
	&\leq \left|\int_0^T (w,[D^s;g]D^3GD^{-s}w) \, dt \right|
				+ c\int_0^T \|w\|^2 \, dt \notag\\
	&\leq \int_0^T \|D^{2-s}[D^{s};g]w\| \, \|D^{1+s}GD^{-s}w\|^2 \, dt
				+ c\int_0^T \|w\|^2 \, dt \notag\\
	&\leq c\int_0^T \|Dw\|^2 \, dt.
\end{align}
Collecting \eqref{N1}-\eqref{N12} we have
\begin{align} \label{N0}
&\|w\|_{L^\infty(0,T);H^0(\T)}^2
	+ \frac12\int_0^T \|D^{3/2}(Gw)\|^2 \, dt
	+ \epsilon \int_0^T \|D^{5/2}w\|^2 \, dt \notag\\
	&\qquad\qquad
		\leq \|w_0\|^2
				+ c\int_0^T \|Dw\|^2 \, dt
				+ 2\int_0^T \|w\|_{3/2}\|F\|_{s-3/2} \, dt.
\end{align}

The same propagation of regularity argument as in the
$s=0$ case reveals
\begin{equation} \label{D32w}
\int_0^T \|D^{3/2}w\|^2 \, dt
	\leq c(T)\left(\|w_0\|^2 + \int_0^T \|w\|_{3/2}\|F\|_{s-3/2} \, dt\right),
\end{equation}
for some $c(T)>0$ nondecreasing in $T$ and independent of $0<\epsilon<1$.
Combining \eqref{N0} and \eqref{D32w}, a density argument
shows the estimate \eqref{UNIFORM} holds for any $s>0$.
\end{proof}

Solutions to the IVP \eqref{I:LINEAR} are obtained via semigroup theory
by writing $L_\epsilon=A+B$ where
\begin{equation*}
A := \epsilon D^5 - \partial_x^5
	\quad\text{and}\quad
		B := GD^3G.
\end{equation*}
A perturbation argument shows that $L_\epsilon$ is sectorial.

\begin{proposition} \label{P:SEC}
Let $\epsilon>0$.
The operator $L_\epsilon$ is sectorial in $H_0^0(\T)$ and thus
$-L_\epsilon$ generates an analytic semigroup denoted $\{\mathcal{S}_\e(t)\}_{t\geq0}$.
Moreover, this semigroup acts on $H_0^s(\T)$ for any $s\geq0$.
\end{proposition}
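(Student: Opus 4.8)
The plan is to realize $L_\e = A + B$ as a perturbation of the sectorial operator $A$ by an operator of strictly lower order. I would first examine $A = \e D^5 - \partial_x^5$ on $H_0^0(\T)$ with domain $H_0^5(\T)$. On the Fourier side $\widehat{Av}(k) = (\e|k|^5 - ik^5)\hat v(k)$, so $A$ is a normal multiplier operator with point spectrum $\sigma(A) = \{\e|k|^5 - ik^5 : k\in\Z\setminus\{0\}\}$; since $\e|k|^5 - ik^5 = |k|^5(\e\mp i)$ according to the sign of $k$, every eigenvalue lies on one of the two rays $\{r(\e\pm i):r>0\}$, hence inside the sector of half-angle $\arctan(1/\e)<\pi/2$ about the positive real axis and at distance $\geq\sqrt{1+\e^2}$ from the origin. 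Since $A$ is normal, $\|(\la - A)^{-1}\|$ equals the reciprocal of $\mathrm{dist}(\la,\sigma(A))$, which is $\gtrsim_\e |\la|^{-1}$ once $\la$ leaves a slightly wider sector; this is precisely the sectoriality of $A$, so $-A$ generates an analytic semigroup. I would also record the elementary inequality $\|v\|_5 \leq c_\e\|Av\|$ for $v\in H_0^5(\T)$, immediate from $|\e|k|^5 - ik^5|^2 = (1+\e^2)k^{10} \gtrsim (1+k^2)^5$ on $\Z\setminus\{0\}$.

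Next I would show that $B = GD^3G$ is $A$-bounded with relative bound zero. Since $G$ is bounded on $H^r(\T)$ for every $r\in\R$ (as noted after the commutator estimate \eqref{COMM}) and $D^3$ maps $H^{r+3}(\T)$ into $H^r(\T)$, the composition $B$ maps $H_0^3(\T)$ boundedly into $H_0^0(\T)$, so $\|Bv\| \leq c\|v\|_3$. Interpolating via $\|v\|_3 \leq \eta\|v\|_5 + C_\eta\|v\|$ (valid for all $\eta>0$) and then using $\|v\|_5 \leq c_\e\|Av\|$ gives $\|Bv\| \leq \eta'\|Av\| + C(\eta')\|v\|$ with $\eta'$ as small as we please. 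A classical perturbation theorem for generators of analytic semigroups then shows $L_\e = A + B$ is sectorial on $H_0^0(\T)$ with $D(L_\e) = H_0^5(\T)$, and hence $-L_\e$ generates an analytic semigroup, which we denote $\{\mathcal S_\e(t)\}_{t\geq0}$.

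For the last assertion I would run the identical argument with $H_0^s(\T)$, $s\geq0$, as the underlying space: $A$ is still a normal multiplier, sectorial on $H_0^s(\T)$ with domain $H_0^{s+5}(\T)$, and $B\colon H_0^{s+3}(\T) \to H_0^s(\T)$ is again a relatively bounded perturbation of arbitrarily small relative bound by the same interpolation estimate measured in the $H^s$-norm. Thus $L_\e$ is sectorial in $H_0^s(\T)$ and generates an analytic semigroup there; since $H_0^s(\T) \hookrightarrow H_0^0(\T)$ densely and the two semigroups are generated by restrictions of the same operator, uniqueness of the associated abstract Cauchy problem forces them to coincide on $H_0^s(\T)$, so $\mathcal S_\e(t)$ restricts to a strongly continuous analytic semigroup on each $H_0^s(\T)$. (Alternatively, $H_0^s$-invariance of $\mathcal S_\e(t)$ follows from the $\e$-uniform bound \eqref{UNIFORM} of Proposition \ref{P:UNI} with $F\equiv 0$ together with a density argument.) The argument is essentially routine bookkeeping; \textbf{the one point needing care} is the order count --- that $A$ has order $5$ while $B=GD^3G$ has order only $3$, which rests on the already-established fact that $G$ preserves every Sobolev space --- and, for the final claim, the consistency of the semigroups along the scale $\{H_0^s(\T)\}$, which is where uniqueness enters.
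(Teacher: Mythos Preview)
Your argument is correct and follows the same overall strategy as the paper: write $L_\e=A+B$ with $A=\e D^5-\partial_x^5$ sectorial and $B=GD^3G$ of lower order, then perturb. The technical details differ in two places. First, for the perturbation step you use the Kato-type criterion ($B$ is $A$-bounded with relative bound zero, via $\|Bv\|\le c\|v\|_3\le \eta\|v\|_5+C_\eta\|v\|\le \eta'\|Av\|+C'\|v\|$), whereas the paper invokes Henry's fractional-power criterion, checking directly that $BA^{-3/5}$ is bounded on $H_0^0(\T)$; both are standard and equivalent here. Second, for the invariance of $H_0^s(\T)$ you rerun the sectoriality argument on each $H_0^s(\T)$ and appeal to uniqueness of the Cauchy problem to identify the semigroups, while the paper instead identifies the fractional-power domains $\mathcal D((L_\e+\lambda)^\beta)=\mathcal D(A^\beta)=H_0^{5\beta}(\T)$ and uses that analytic semigroups leave these invariant. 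Your route is arguably more self-contained (no fractional powers needed), at the cost of the extra consistency check along the scale; the paper's route is shorter once the fractional-power machinery from Henry is in hand.
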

\begin{proof}
% Step 1 - A sectorial
%%%%%%%%%%%
The operator $A$ has domain $\mathcal{D}(A) = H_0^5(\T) \subset H_0^0(\T)$.
Fixing $\theta \in (\arctan\epsilon^{-1},\pi)$, it is clear that the sector
$$
S_\theta = \{ \lambda : \theta < |\arg\lambda| \leq \pi, \lambda \neq 0 \}
$$
lies in its resolvent.
Moreover, there exists $C>0$ so that for any $\lambda \in S_\theta$,
$$
\|(A-\lambda)^{-1}\| \leq \sup_{k\neq0} |(\epsilon-i)k^5-\lambda| \leq \frac{C}{|\lambda|}.
$$
Thus $A$ is sectorial in $H_0^0(\T)$ \cite[Definition 1.3.1]{MR610244}.

% Step 2 - A+B sectorial
Observe that $\sigma(A) = \{ (\epsilon - i)k^5 : k\in\Z^* \}$ so that $\text{Re} \; \sigma(A) \geq \epsilon$.
Therefore, $A^{-\omega}$ is defined for all $\omega>0$ and, in particular,
\begin{align*}
\|BA^{-3/5}f\|^2
	&\leq c \|A^{-3/5}f\|_{H_0^3(\T)}^2 \\
	&\leq c \sum_{k\neq0} (1+k^2)^3 |(\epsilon - i)k^5|^{-6/5} |\hat{f}_k|^2 \\
	&\leq c \|f\|^2.
\end{align*}
It follows that $L_\epsilon=A+B$ is a sectorial operator
on $H_0^0(\T)$ \cite[Corollary 1.4.5]{MR610244}.
Therefore $-L_\epsilon$ generates an analytic semigroup
$\{\mathcal{S}_\epsilon(t)\}_{t\geq0}$ on $H_0^0(\T)$
\cite[Theorem 1.3.4]{MR610244}.
% Step 3 - persistence
Using \cite[Theorem 1.4.8]{MR610244}, we can compute explicitly
$\mathcal{D}((A+B+\lambda)^\beta)=\mathcal{D}(A^\beta)=H_0^{5\beta}(\T)$
for all $\beta\geq0$ and $\lambda>0$ large enough,
hence for all $t>0$ and $s\geq0$,
$$
\mathcal{S}_\e(t)H_0^s(\T) \subset H_0^s(\T),
$$
as desired.
\end{proof}

% Unique Continuation Principle
The following unique continuation principle leads to exponential
stability and exact control results for solutions to
IVP \eqref{I:LINEAR}.
\begin{proposition} \label{P:UCP}
Let $c \in L^2(0,T)$ and $v \in L^2(0,T;H_0^0(\T))$
be such that
\begin{equation} \label{UCP}
\begin{cases}
	\partial_tv + (\epsilon D^5 - \partial_x^5)v = 0,
		& \text{in $\T\times(0,T)$} \\
	v(x,t) = c(t)
		& \text{for a.e. $(x,t)\in(a,b)\times(0,T)$}.
\end{cases}
\end{equation}
for some numbers $T>0$ and $0 \leq a < b \leq 2\pi$.
Then $v\equiv0$ for a.e. $(x,t)\in\T\times(0,T)$.
\end{proposition}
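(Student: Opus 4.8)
The plan is to reduce the statement to a spectral (Fourier) analysis of the linear propagator, exploiting the fact that the equation in \eqref{UCP} has constant coefficients. First I would expand $v(x,t) = \sum_{k\in\Z} \hat v(k,t) e^{ikx}$, noting that since $[v_0]$ is not prescribed we may as well work coordinate-wise in $k$. The first equation of \eqref{UCP} decouples into the scalar ODEs $\partial_t \hat v(k,t) + \bigl((\epsilon - i)k^5\bigr)\hat v(k,t) = 0$ for $k \neq 0$ (with the $k=0$ mode governed by $\partial_t\hat v(0,t)+\epsilon\hat v(0,t)\cdot 0 = 0$, since $D^5$ annihilates constants and $\partial_x^5$ does too), giving $\hat v(k,t) = e^{-(\epsilon-i)k^5 t}\hat v(k,0)$. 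In particular each Fourier coefficient either vanishes identically on $(0,T)$ or is nowhere zero there.

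Next I would bring in the overdetermination condition $v(x,t)=c(t)$ on $(a,b)\times(0,T)$. Differentiating in $x$ shows $\partial_x v(x,t)=0$ for a.e.\ $(x,t)\in(a,b)\times(0,T)$, i.e.\ $\sum_{k\neq 0} ik\,\hat v(k,t) e^{ikx} = 0$ for a.e.\ $x\in(a,b)$ and a.e.\ $t$. Fixing a time $t$ for which this holds and for which the regularity in $x$ is adequate (using parabolic smoothing from the $\epsilon D^5$ term, solutions are in fact smooth in $x$ for $t>0$, so the vanishing holds classically on $(a,b)$), the function $\sum_{k\neq0} ik\,\hat v(k,t)e^{ikx}$ is the restriction to $(a,b)$ of an entire function of $x$ (a uniformly convergent trigonometric series with super-exponentially decaying coefficients, by the factor $e^{-\epsilon k^5 t}$), hence it vanishes identically on $\T$. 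Therefore $k\,\hat v(k,t)=0$ for every $k$, so $\hat v(k,t)=0$ for all $k\neq 0$. By the dichotomy from the ODE analysis, $\hat v(k,0)=0$ for all $k\neq0$, hence $\hat v(k,s)=0$ for all $k\neq0$ and all $s\in(0,T)$; thus $v(x,s)=\hat v(0,s)$ is spatially constant for a.e.\ $s$. Finally, the remaining constant mode: $v(x,s)=\hat v(0,s)$ together with $v=c$ on $(a,b)$ forces $\hat v(0,s)=c(s)$; and the equation gives $\partial_s \hat v(0,s)=0$, so $v\equiv \mathrm{const}$ in both variables. But $v\in L^2(0,T;H_0^0(\T))$ means $[v(\cdot,s)]=0$, i.e.\ $\hat v(0,s)=0$. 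Hence $v\equiv 0$ a.e., as claimed.

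The main technical point to handle carefully is the passage from ``$\partial_x v = 0$ a.e.\ on $(a,b)\times(0,T)$'' to ``$\hat v(k,t)=0$ for all $k$''. The clean route is to first fix (for a.e.\ $t\in(0,T)$) a representative of $v(\cdot,t)$ that is smooth on $\T$ — this is legitimate because the semigroup $\{\mathcal S_\epsilon(t)\}$ is analytic (indeed for the constant-coefficient operator $\epsilon D^5 - \partial_x^5$ the symbol has real part $\epsilon|k|^5\to+\infty$, so the solution operator maps $L^2$ into $C^\infty(\T)$ for each $t>0$) — and then use that a real-analytic (in fact entire) function on the circle vanishing on the nonempty open arc $(a,b)$ vanishes identically, which kills all nonzero Fourier modes at that time $t$. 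One must also make sure the exceptional null sets in $t$ don't obstruct the argument: the set of ``good'' $t\in(0,T)$ has full measure, and on it the above forces $\hat v(k,t)=0$ for $k\neq0$; then continuity in $t$ of each $\hat v(k,\cdot)$ (from the explicit exponential formula, valid distributionally hence everywhere) propagates this to all of $(0,T)$ and back to $t=0$. I expect the write-up obstacle to be purely bookkeeping of these regularity-in-$t$ and null-set issues rather than any genuine analytic difficulty, since the constant-coefficient structure makes the Fourier argument essentially elementary.
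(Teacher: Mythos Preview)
Your Fourier argument is correct and clean for $\epsilon>0$: the factor $e^{-\epsilon|k|^5 t}$ indeed forces $v(\cdot,t)$ to extend to an entire function of $x$ for each $t>0$, so vanishing of $\partial_x v$ on a nonempty arc propagates to all of $\T$ by analytic continuation, killing every nonzero mode. This is in fact more elementary than the paper's route for $\epsilon>0$, which instead differentiates the equation to obtain $(1+\epsilon\mathcal H)\partial_x^6 v=0$, isolates a one-sided Fourier series from the Hilbert transform identity, and then invokes a lemma of Linares--Rosier (that a function in $H^s(\T)$ with only nonnegative frequencies vanishing on an arc must vanish identically) to conclude. Your analyticity argument bypasses that machinery.

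The genuine gap is at $\epsilon=0$. The proposition must cover this case --- it is applied precisely in that form in the proof of exact controllability (Proposition~\ref{P:LEC}), where the adjoint system carries no dissipative term. For $\epsilon=0$ the evolution is unitary, $|\hat v(k,t)|=|\hat v(k,0)|$, and you know only $v(\cdot,t)\in L^2(\T)$; the series $\sum_{k\neq0} ik\,\hat v(k,t)e^{ikx}$ is not analytic in general, so the ``entire function vanishing on an arc'' step fails outright. Nor is there an obvious elementary substitute via Fourier-in-$t$: the time frequencies $k^5$ are distinct but not well enough separated to disentangle modes at $L^2$ regularity. The paper handles $\epsilon=0$ by a different mechanism: the hypothesis gives $\partial_x^k v=0$ on $(a,b)\times(0,T)$ for every $k$, and a propagation-of-regularity argument (in the spirit of Proposition~\ref{P:UNI}) bootstraps $v$ into $C^\infty(\T\times(0,T))$, after which the classical unique continuation theorem of Saut--Scheurer for dispersive equations applies. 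You need to supply an argument of this type for $\epsilon=0$.
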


\begin{proof}
\noindent
{\it (Case $\epsilon=0$.)} \\
%%%%%%%%%%%%%%%%%%%%%%
By assumption, $\partial_x^2v=0$ a.e. in $(a,b)\times(0,T)$
and so a propagation of regularity argument as in
Proposition \ref{P:UNI} implies $v \in L^2(0,T;H_0^2(\T))$.
Thus for every $\delta>0$, there exists $0<t<\delta$ such
that $v(t) \in H_0^2(\T)$.

In fact $\partial_x^kv=0$ a.e. in $(a,b)\times(0,T)$
for every $k\in\Z^+$.
Repeating the above argument and using the equation we
conclude that $v \in C^\infty((0,T)\times\T)$.
The unique continuation property now follows from the result
in \cite{MR871574}.

\noindent
{\it (Case $\epsilon>0$.)} \\
%%%%%%%%%%%%%%%%%%%%%%
From \eqref{UCP} it follows that for a.e.
$(x,t)\in (a,b)\times(0,T)$,
$$\partial_tv=(1+\epsilon \mathcal H)\partial_x^5v = c'(t).$$
Moreover, for a.e. $t\in(0,T)$
\begin{align*}
    \partial_x^6v(\cdot,t) &\in H^{-6}(\T),\\
	\partial_x^6v(\cdot,t)&=0,\\
	\mathcal H\partial_x^6v(\cdot,t) &=0,
\end{align*}
since $\epsilon>0$.
Picking such a $t\in(0,T)$ and setting
$w(\cdot)=\partial_x^6v(\cdot,t)$, write
$$w=\sum\limits_{k\in\Z}\widehat{w}_ke^{ikx}$$
where the convergence occurs in $H^{-6}(\T)$.
Now
$$0=iw-\mathcal Hw=2i\sum\limits_{k>0}\widehat{w}_ke^{ikx}.$$
As $w$ is real, we use the following result.
\begin{lemma} \cite[Lemma 2.9]{MR3335395}
Let $s\in\R$ and $w(x)=\sum_{k\geq0}\widehat{w}_ke^{ikx}\in H^s(\T)$
and $w=0$ a.e. $(a,b)$.
Then $w\equiv 0$ in $\T$. 
\end{lemma}
Thus $\partial_x^6v\equiv 0$ in $\T$ which implies that
$v(x,t)=c(t)$ a.e. $\T\times(0,T)$.
Furthermore, $(1+\epsilon \mathcal H)\partial_x^6v=c_t(t)=0$ and
since $v$ has mean value zero we have shown the desired outcome that
$v\equiv 0$ in $\T\times(0,T)$.
\end{proof}

% Linear exponential stability.
The above propositions imply an observability inequality leading
to the following result.

\begin{proposition} \label{P:STAB}
Let $0<\epsilon<1$, $s\geq0$.
There exists constants $C,\lambda>0$ independent of $\epsilon$
such that
\begin{equation*}
    \|\mathcal{S}_\epsilon(t)v_0\|_s \leq Ce^{-\lambda t}\|v_0\|_s
\end{equation*}
for all $v_0 \in H_0^s(\T)$.
\end{proposition}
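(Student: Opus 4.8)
The plan is to deduce from Propositions~\ref{P:UNI}, \ref{P:SEC} and \ref{P:UCP} the $\epsilon$-uniform \emph{coercivity (observability) inequality}: there are $T>0$ and $c\in(0,1)$, independent of $\epsilon\in(0,1)$, with
\begin{equation*}
\|\mathcal{S}_\epsilon(T)v_0\|^2 \le (1-c)\,\|v_0\|^2, \qquad v_0 \in H_0^0(\T).
\end{equation*}
Once this is known, the semigroup property gives $\|\mathcal{S}_\epsilon(nT)v_0\|\le(1-c)^{n/2}\|v_0\|$, and filling in intermediate times with the $\epsilon$-uniform bound $\|\mathcal{S}_\epsilon(\tau)\|_{\mathcal{L}(H_0^0(\T))}\le c(0,T)$ ($0\le\tau\le T$) of Proposition~\ref{P:UNI} yields the conclusion for $s=0$ with $\lambda=-\frac{1}{2T}\log(1-c)$.

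I would prove the coercivity inequality by contradiction. If it fails for every $c>0$, there are $\epsilon_n\in(0,1)$ and $v_0^n\in H_0^0(\T)$ with $\|v_0^n\|=1$ and, setting $v^n=\mathcal{S}_{\epsilon_n}(\cdot)v_0^n$, $\|v^n(T)\|^2\to 1$. Scaling \eqref{I:LINEAR} (with $F=0$) by $v^n$ gives
\begin{equation*}
\|v_0^n\|^2 - \|v^n(T)\|^2 = 2\int_0^T\|D^{3/2}(Gv^n)\|^2\,dt + 2\epsilon_n\int_0^T\|D^{5/2}v^n\|^2\,dt,
\end{equation*}
so \emph{both} terms on the right tend to $0$, and moreover $t\mapsto\|v^n(t)\|^2$ is nonincreasing. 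Proposition~\ref{P:UNI} bounds $\{v^n\}$ in $Z_{0,T}$, and \eqref{I:LINEAR} bounds $\{\partial_tv^n\}$ in $L^2(0,T;H^{-7/2}(\T))$ (using that $G$ is bounded on every $H^r$ and $D^3$ loses three derivatives); by the Aubin--Lions lemma a subsequence converges strongly in $L^2(0,T;H_0^0(\T))$ and weakly in $L^2(0,T;H_0^{3/2}(\T))$ to some $v$, and $\epsilon_n\to\epsilon_\infty\in[0,1]$ along a further subsequence. Because $\epsilon_n\int_0^T\|D^{5/2}v^n\|^2\,dt\to0$, the term $\epsilon_nD^5v^n$ tends to $0$ in $\mathcal{D}'$, so passing to the limit in the equation — and in the observation term, which forces $Gv\equiv0$ and hence $v(x,t)=\int_\T g(y)v(y,t)\,dy$ on $\omega\times(0,T)$ — shows that $v$ solves $\partial_tv+(\epsilon_\infty D^5-\partial_x^5)v=0$. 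Proposition~\ref{P:UCP} then gives $v\equiv0$. But $\|v^n(t)\|^2$ being nonincreasing forces $\int_0^T\|v^n(t)\|^2\,dt\ge T\|v^n(T)\|^2\to T>0$, contradicting $v^n\to v\equiv0$ in $L^2(0,T;H_0^0(\T))$.

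For $s>0$ the same scheme is run at regularity $s$. Applying $D^s$ to \eqref{I:LINEAR} and pairing with $w=D^sv$ produces, besides the dissipation $\|D^{s+3/2}(Gv)\|^2$, the lower-order contribution of the commutator operator $E$ of \eqref{EROP}, which is handled with the commutator estimates \eqref{COMM}, \eqref{CTRL3} and the interpolation inequalities already used in the proof of Proposition~\ref{P:UNI}. A compactness--uniqueness argument as above (Aubin--Lions now giving strong convergence in $L^2(0,T;H^s(\T))$, and Proposition~\ref{P:UCP}, which is insensitive to the Sobolev index, again yielding $v\equiv0$; one first establishes the estimate with a compact lower-order remainder, then removes the remainder by a second compactness--uniqueness step) produces the observability inequality $\|v_0\|_s^2\le C\int_0^T\|D^{s+3/2}(G\mathcal{S}_\epsilon(t)v_0)\|^2\,dt$. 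Combining it with the $H^s$ energy identity and the $L^2$ decay already obtained to absorb the $E$-terms leads to $\|\mathcal{S}_\epsilon(T)v_0\|_s^2\le\gamma^2\|v_0\|_s^2+c\|v_0\|_0^2$ with $\gamma\in(0,1)$ and $\gamma,c$ independent of $\epsilon$; iterating this on $[nT,(n+1)T]$ and bounding $\|\mathcal{S}_\epsilon(nT)v_0\|_0$ by the $L^2$ decay leaves a geometric recursion whose solution gives $\|\mathcal{S}_\epsilon(nT)v_0\|_s\le C\beta^n\|v_0\|_s$ for some $\beta\in(0,1)$; Proposition~\ref{P:UNI} again fills in the intermediate times.

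I expect the compactness--uniqueness step, together with its uniformity in $\epsilon$, to be the main obstacle: one must extract a \emph{strongly} convergent subsequence so that the weak limit inherits the overdetermination $Gv\equiv0$, and one must ensure the limiting equation is still governed by Proposition~\ref{P:UCP} whether the parameters $\epsilon_n$ accumulate at $0$ or at a positive value — this is exactly what the regularizing term $\epsilon D^5$ and the two cases of Proposition~\ref{P:UCP} are designed to accommodate. A secondary technical point is the careful accounting of the commutator remainders produced by $E$ in the range $s>0$, together with the iteration used there to propagate the $L^2$ decay to $H^s$.
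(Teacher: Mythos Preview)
For $s=0$ your compactness--uniqueness argument is essentially the paper's, and your explicit extraction of a subsequence $\epsilon_n\to\epsilon_\infty\in[0,1]$ is a welcome clarification of the $\epsilon$-uniformity that the paper leaves somewhat implicit (note, incidentally, that your own computation shows $\epsilon_nD^5v^n\to0$ in $L^2(0,T;H^{-5/2})$ regardless of $\epsilon_\infty$, so the limiting equation is always the $\epsilon=0$ one and only that case of Proposition~\ref{P:UCP} is actually invoked).

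For $s>0$ the routes diverge. The paper bypasses the commutator bookkeeping entirely by differentiating in time: since the equation is linear and autonomous, $w=\partial_tv$ solves the same system with data $w_0=(\epsilon D^5-\partial_x^5+GD^3G)v_0\in H_0^0(\T)$, so the $s=0$ case gives $\|\partial_tv(t)\|\le Ce^{-\lambda t}\|v_0\|_5$; one then reads $\|D^5v\|$ off the equation, interpolates the $GD^3Gv$ remainder against the already-established $L^2$ decay, and obtains the $H^5$ estimate directly. Interpolation covers $0\le s\le5$ and induction the rest. No $H^s$-level observability inequality and no handling of $E$ are needed.

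Your plan --- rerunning compactness--uniqueness at level $s$ with a compact remainder, then removing the remainder by a second contradiction step, then iterating a recursion $\|v(T)\|_s^2\le\gamma^2\|v_0\|_s^2+c\|v_0\|_0^2$ --- is the ``direct'' alternative and is in principle workable, but it is considerably heavier: you must reconcile the observation $D^{s+3/2}(Gv)$ in your observability inequality with the dissipation $D^{3/2}(GD^sv)$ appearing in the $H^s$ energy identity (they differ by commutators), and you must verify that the $E$-contribution truly interpolates down to a $c\|v_0\|_0^2$ term rather than a $c\|v_0\|_s^2$ term that would spoil $\gamma<1$. The paper's time-differentiation trick buys all of this for free; what your approach would buy in exchange is independence from the autonomy of the equation, which is not needed here.
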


\begin{proof}
%% STEP 1 - H_0^0 stability
\noindent
{\it (Case $s=0$.)} \\
%%%%%%%%%%%%%%%%%%%%%%
Setting $F\equiv0$ in \eqref{I:LINEAR2} and scaling by $v$ yields 
for any $T>0$
\begin{equation} \label{ENERGY}
\frac{1}{2}\|v(T)\|^2 + \epsilon \int_0^T \|D^{5/2}v\|^2 \, d\tau
    + \int_0^T\|D^{3/2}(Gv)\|^2 \, d\tau
	    = \frac{1}{2}\|v_0\|^2,
\end{equation}
and so stability follows from the observability inequality
\begin{equation}\label{OBS}
\|v_0\|^2 
	\leq c \left( \epsilon \int_0^T \|D^{5/2}v\|^2 \, dt
	        + \int_0^T \|D^{3/2} (Gv)(t)\|^2 \, dt \right).
\end{equation}
For the sake of a contradiction, suppose that \eqref{OBS} fails.
Then there is a sequence $\{v_0^n\}_{n=1}^{\infty}\subset H_0^0(\T)$, (up to scaling)
such that
\begin{equation} \label{OBS2}
1
	= \|v_0^n\|^2
	> n \left( \epsilon \int_0^T \|D^{5/2}v\|^2 \, dt
	        + \int_0^T \|D^{3/2} (Gv^n)(t)\|^2 \, dt \right),
\end{equation}
with $v^n$ denoting the solution to \eqref{I:LINEAR}
corresponding to data $v_0^n$.
For any $\delta>0$, denote $\gamma = -\frac72-\delta$.
Applying the Sobolev embedding,
\begin{equation*}
\|(\epsilon D^5-\partial_x^5)v^n\|_{L^2(0,T;H^{\gamma}(\T))}
	\leq c\|v^n\|_{L^2(0,T;H^{3/2}(\T))},
\end{equation*}
which is uniformly bounded by the estimates \eqref{UNIFORM}.
Using commutator estimates, the Sobolev embedding and
the fact that $G$ is bounded on $H_0^0(\T)$,
\begin{align*}
\|D^{\gamma}G(D^3Gv^n)\|
	&\leq \|G(D^{3+\gamma}Gv^n)\| + \|[D^{\gamma},G]D^3Gv^n\| \\
	&\leq c\left(\|Gv^n\|_{3+\gamma} + \|Gv^n\|_{2+\gamma}\right) \\
	&\leq c\|Gv^n\|_{3/2}.
\end{align*}
Consequently,
\begin{equation*}
\|GD^3Gv^n\|^2_{L^2(0,T;H^{\gamma}(\T))}
	\leq c\int_0^T \|D^{3/2} (Gv^n)\|^2dt \leq C
\end{equation*}
using \eqref{ENERGY}.
Combining these estimates and using the equation produces
\begin{equation*}
\|v_t^n\|_{L^2(0,T;H^{\gamma}(\T))} \leq C
\end{equation*}
for some $C>0$ independent of $n$.
Note that $\{v_t^n\}$ is bounded in $L^2(0,T;H^{\gamma}(\T))$ and,
from \eqref{UNIFORM}, the sequence $\{v^n\}$ is bounded in
$L^2(0,T;H^{3/2}(\T))$.
Applying the Banach-Alaoglu theorem and the Aubin-Lions lemma,
we obtain a subsequence with the following properties:
$$\begin{array}{llc}
v^n\rightarrow v  &  \textrm{in } L^2(0,T; H^{\beta}(\T)) & \forall \beta<3/2   \\
v^n\rightarrow v  &  \textrm{in } L^2(0,T; H^{3/2}(\T)) \textrm{ weak} &   \\
v^n\rightarrow v  &  \textrm{in } L^{\infty}(0,T; L^2(\T)) \textrm{ weak}\ast, &   
\end{array}$$
where $v\in L^2(0,T;H_0^{\beta}(\T))\cap L^{\infty}(0,T;L^2(\T))$.
In particular, taking $\beta=0$
\begin{equation*}
(v^n)^2\rightarrow v^2\qquad \textrm{in }L^1(\T\times(0,T)).
\end{equation*}
Letting $n\rightarrow \infty$ in \eqref{OBS2} we have that
\begin{equation*}
\int_0^T \|D^{3/2}(Gv)\|^2 \, dt = 0.
\end{equation*}
Hence $Gv=0$ a.e. $\T\times(0,T)$ and using \eqref{CTRL} we may write
\begin{equation*}
v(x,t) = \int_\T g(y) v(y,t) \, dy := c(t)
	\qquad \text{for all $(x,t)\in\omega\times(0,T),$}
\end{equation*}
where $\omega=\{x\in\T:g(x)>0\}$ and $c \in L^\infty(0,T)$.
Thus $v$ satisfies the hypothesis of Proposition \ref{P:UCP}, implying
that $v\equiv0$ and contradicting the fact that $\|v(0)\|=\|v_0^n\|=1$.

%% STEP 2 - H_0^s stability
\noindent
{\it (Case $s=5$.)} \\
%%%%%%%%%%%%%%%%%%%%%%
Assume $v_0 \in H^5(\T)$ and denote $v(t)=\mathcal{S}(t)v_0$.
Let $w=\partial_tv$, which solves
\begin{equation*}
\begin{cases}
	\partial_tw + (\epsilon D^5 - \partial_x^5)w + GD^3Gw = 0,
		\qquad x\in\T, t\geq0, \\
	w(x,0) = w_0(x) := (\epsilon D^5 - \partial_x^5 + GD^3G)v_0,
\end{cases}
\end{equation*}
and so by the $s=0$ case
\begin{equation*}
\|w(t)\| = \|\mathcal{S}(t)w_0\| \leq Ce^{-\lambda t}\|w_0\|.
\end{equation*}
Using the equation \eqref{I:LINEAR} and the previous estimate
\begin{align*}
\|D^5v\|
    &\leq C\|(\epsilon D^5 - \partial_x^5)v\| \\
	&\leq C\|\partial_tv + GD^3Gv\| \\
	&\leq \left(Ce^{-\lambda t}\|w_0\|
					+ c(\delta)\|v\|\right) + \delta\|D^5v\|,
\end{align*}
for any $\delta>0$.
Choosing $\delta>0$ small enough,
\begin{align*}
\|D^5v\|
	&\leq (1-\delta)^{-1}\left(Ce^{-\lambda t}\|v_0\|_5
					+ c(\delta)e^{-\lambda t}\|v_0\|\right) \notag\\
	&\leq Ce^{-\lambda t}\|v_0\|_5.
\end{align*}
where $\lambda>0$ is as in the case $s=0$.
Interpolating produces the desired result for $0 \leq s \leq 5$,
with the case of $s>5$ following by induction.
Thus the constant appearing above will be nondecreasing in $s$.
\end{proof}

We now establish solutions as $\epsilon\searrow0$ using a
Bona-Smith argument \cite{MR0385355}.
The resulting homogeneous solutions to \eqref{I:LINEAR2} will be denoted
$\mathcal{S}(t)v_0$.

% Bona/Smith Argument
\begin{proposition} \label{P:BSA}
Fix $s\in\R$ and $T>0$.
Let $v_0 \in H_0^s(\T)$ and $F \in L^2(0,T;H_0^{s-3/2}(\T))$.
Then there exists a unique solution $v \in Z_{s,T}$
to the IVP
\begin{equation} \label{I:LINEAR2}
\begin{cases}
	\partial_tv - \partial_x^5v + GD^3Gv = F,
		\qquad x\in\T, t\geq0, \\
	v(x,0) = v_0(x)
\end{cases}
\end{equation}
satisfying
\begin{equation} \label{ZST}
\|v\|_{s,T} \leq c(s,T)\left( \|v_0\|_s + \|F\|_{L^2(0,T;H^{s-3/2}(\T))} \right)
\end{equation}
with $c(s,T)$ nondecreasing in $T$.
Moreover, there exists constants $C,\lambda>0$, $C=C(s)$, such that
\begin{equation} \label{LINEX}
    \|\mathcal{S}(t)v_0\|_s \leq Ce^{-\lambda t}\|v_0\|_s
\end{equation}
for all $v_0 \in H_0^s(\T)$, $s\geq0$.
\end{proposition}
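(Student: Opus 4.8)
The plan is to obtain the solution of \eqref{I:LINEAR2} as the limit, as $\epsilon\searrow0$, of the solutions $v^\epsilon$ of the regularized problem \eqref{I:LINEAR}, using the $\epsilon$-uniform smoothing estimate of Proposition \ref{P:UNI} and the $\epsilon$-uniform decay of Proposition \ref{P:STAB}. For fixed $\epsilon\in(0,1)$, Proposition \ref{P:SEC} furnishes the analytic semigroup $\mathcal{S}_\epsilon(t)$ generated by $-L_\epsilon$ on $H_0^0(\T)$, leaving every $H_0^s(\T)$ invariant, so Duhamel's formula
\begin{equation*}
v^\epsilon(t)=\mathcal{S}_\epsilon(t)v_0+\int_0^t\mathcal{S}_\epsilon(t-\tau)F(\tau)\,d\tau
\end{equation*}
yields a mild solution, which for smooth data is the classical solution of \eqref{I:LINEAR}; a density argument (as at the end of the proof of Proposition \ref{P:UNI}) then shows $v^\epsilon\in Z_{s,T}$ with the bound \eqref{UNIFORM} for all $v_0\in H_0^s(\T)$ and $F\in L^2(0,T;H_0^{s-3/2}(\T))$, uniformly in $\epsilon\in(0,1)$.

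For the Bona--Smith comparison, fix $0<\epsilon'<\epsilon<1$; the difference $w=v^\epsilon-v^{\epsilon'}$ solves \eqref{I:LINEAR} with operator $\epsilon D^5-\partial_x^5$, zero initial data, and forcing $-(\epsilon-\epsilon')D^5v^{\epsilon'}$. Since $v^{\epsilon'}\in Z_{s,T}$ we have $D^5v^{\epsilon'}\in L^2(0,T;H^{s-5}(\T))=L^2(0,T;H^{(s-7/2)-3/2}(\T))$, so applying \eqref{UNIFORM} at regularity $s-7/2$ gives
\begin{equation*}
\|w\|_{s-7/2,T}\leq c(s,T)\,|\epsilon-\epsilon'|\,\bigl(\|v_0\|_s+\|F\|_{L^2(0,T;H^{s-3/2}(\T))}\bigr).
\end{equation*}
Hence $\{v^\epsilon\}$ is Cauchy in $Z_{s-7/2,T}$ and converges strongly to some $v$; by \eqref{UNIFORM} it is also bounded in $Z_{s,T}$, so it converges weakly-$\ast$ in $L^\infty(0,T;H_0^s(\T))$ and weakly in $L^2(0,T;H_0^{s+3/2}(\T))$, necessarily to the same $v$, and weak lower semicontinuity of these norms gives $v\in Z_{s,T}$ with the bound \eqref{ZST}, $c(s,T)$ nondecreasing in $T$ as in Proposition \ref{P:UNI}. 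Passing to the limit in \eqref{I:LINEAR}, the term $\epsilon D^5v^\epsilon\to0$ in $L^2(0,T;H^{s-7/2}(\T))$ since $\|D^5v^\epsilon\|_{L^2H^{s-7/2}}=\|v^\epsilon\|_{L^2H^{s+3/2}}$ is bounded, and the remaining terms converge in the sense of distributions, so $v$ solves \eqref{I:LINEAR2} with $\partial_tv=\partial_x^5v-GD^3Gv+F\in L^2(0,T;H^{s-7/2}(\T))$; thus $v\in C([0,T];H_0^{s-7/2}(\T))$, which together with $v\in L^\infty(0,T;H_0^s(\T))$ gives weak continuity into $H_0^s(\T)$, and an energy identity at level $s$ as in \eqref{WGHT} (with $\epsilon=0$) renders $t\mapsto\|v(t)\|_s^2$ continuous, so $v\in C([0,T];H_0^s(\T))$ and $v\in Z_{s,T}$ as required.

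Uniqueness is an $L^2$ energy estimate: if $v_1,v_2\in Z_{s,T}$ both solve \eqref{I:LINEAR2} with the same data, then $w=v_1-v_2$ solves the homogeneous equation with $w(0)=0$, and (after a routine regularization) scaling by $w$ with $\int w\,\partial_x^5w\,dx=0$ and $\int w\,GD^3Gw\,dx=\|D^{3/2}(Gw)\|^2\geq0$ gives $\|w(t)\|^2\leq\|w(0)\|^2=0$, so $v_1=v_2$. For the decay, take $F\equiv0$, so $v^\epsilon(t)=\mathcal{S}_\epsilon(t)v_0$ and Proposition \ref{P:STAB} gives $\|\mathcal{S}_\epsilon(t)v_0\|_s\leq Ce^{-\lambda t}\|v_0\|_s$ with $C=C(s)$ and $\lambda$ independent of $\epsilon$; since $\mathcal{S}_\epsilon(t)v_0\to\mathcal{S}(t)v_0$ strongly in $H^{s-7/2}(\T)$ for each $t$ while staying bounded by $Ce^{-\lambda t}\|v_0\|_s$ in $H^s(\T)$, weak lower semicontinuity yields $\|\mathcal{S}(t)v_0\|_s\leq Ce^{-\lambda t}\|v_0\|_s$, which is \eqref{LINEX}.

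The step I expect to be the crux is the vanishing-regularization limit and the attendant derivative loss: the comparison forcing $(\epsilon-\epsilon')D^5v^{\epsilon'}$ costs five derivatives, a net $7/2$ once the $3/2$-smoothing of Proposition \ref{P:UNI} is spent, so strong convergence is only available $7/2$ derivatives below $Z_{s,T}$, and the full $Z_{s,T}$-regularity of the limit --- in particular its strong continuity in time into $H_0^s(\T)$ --- has to be recovered a posteriori from the $\epsilon$-uniform bounds, weak compactness, and the energy identity \eqref{WGHT}.
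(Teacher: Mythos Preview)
Your argument is correct, but the route differs from the paper's in one structural respect. The paper follows the Bona--Smith scheme more faithfully: in addition to the parabolic regularization $\epsilon D^5$, it \emph{also} regularizes the data and forcing, setting $\widehat{v_0^\epsilon}(k)=\exp(-\epsilon^{1/10}k^2)\hat v_0(k)$ and taking $F^n\in C([0,T];H^\infty)$ with $F^n\to F$, so that each $v^n$ lies in arbitrarily high Sobolev spaces with the quantitative bound $\epsilon^{\gamma/10}\|v_0^\epsilon\|_{s+\gamma}\lesssim\|v_0\|_s$. This lets the paper apply \eqref{UNIFORM} at the \emph{higher} level (roughly $s+2$) to control $\|v^n\|_{s+2,T}$, so that the five-derivative cost of the comparison forcing $(\epsilon^n-\epsilon^m)D^5v^n$ is paid \emph{upward} by the extra smoothness of the regularized solution, and the residual factor $(\epsilon^n)^{\alpha}$ still tends to zero. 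The upshot is that $\{v^n\}$ is Cauchy directly in $Z_{s,T}$; the limit is automatically in $C([0,T];H_0^s(\T))$ with \eqref{ZST}, and uniqueness and continuous dependence are read off from \eqref{ZST} itself.

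You instead keep $v_0$ and $F$ fixed and pay the derivative loss \emph{downward}, getting strong convergence only in $Z_{s-7/2,T}$ and then upgrading via weak-$\ast$ compactness plus an energy argument. This vanishing-viscosity variant works, but the clause ``an energy identity at level $s$ as in \eqref{WGHT} (with $\epsilon=0$) renders $t\mapsto\|v(t)\|_s^2$ continuous'' hides a nontrivial justification: with only $v\in L^2_tH^{s+3/2}$ and $\partial_t v\in L^2_tH^{s-7/2}$, the pairing $(D^sv,D^s\partial_tv)$ is not a priori defined, and one must mollify in $x$ (or argue via the approximants $v^\epsilon$) to exploit the cancellation $\int D^sv\,\partial_x^5D^sv\,dx=0$ rigorously before passing to the limit. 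The paper's data regularization buys precisely the avoidance of this a posteriori step, at the price of a slightly more elaborate setup. Incidentally, your derivative count is not sharp: since $v^{\epsilon'}\in L^2(0,T;H^{s+3/2})$ you actually have $D^5v^{\epsilon'}\in L^2(0,T;H^{s-7/2})=L^2(0,T;H^{(s-2)-3/2})$, so strong convergence holds already in $Z_{s-2,T}$.
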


\begin{proof}
We follow the argument of Bona and Smith to establish existence
of solutions to the IVP \eqref{I:LINEAR2}.
Define the regularization
\begin{equation} \label{REG1}
\widehat{v_0^\epsilon}(k) = \exp(-\epsilon^{1/10}k^2)\hat{v}_0(k)
\end{equation}
and observe that $v_0^\epsilon \in H^\infty(\T)$ and for
$\epsilon$ sufficiently small
\begin{equation} \label{REG2}
\epsilon^{\gamma/10}\|v_0^\epsilon\|_{s+\gamma}
	\leq c(\gamma) \|v_0\|_s
\end{equation}
for any $\gamma>0$.
Let $\{\epsilon_n\}\subset (0,1)$ be a monotonic sequence
satisfying $\epsilon^n\searrow0$ and denote
$v_0^n = v_0^{\epsilon^n}$.
Observe that $v_0^n \rightarrow v_0$ strongly in $H^s(\T)$.
Let $F^n \in C([0,T];H^\infty(\T))$ be a sequence converging
strongly to $F$ in $L^2(0,T;H^{s-3/2}(\T))$.
Let $v^n$ be the associated solution to the IVP
\begin{equation}
\partial_tv^n + L_{\epsilon^n}v^n = F^n,
	\qquad v^n(0)=v_0^n
\end{equation}
provided by Proposition \ref{P:UNI}.

We now demonstrate that the sequence $\{v^n\}$ is Cauchy
in $Z_{s,T}$ by considering
$$
\|v^n-v^m\|_{s,T}
$$
assuming $n \leq m$ (so that $0<\epsilon^m<\epsilon^n$).
The difference $w=v^n-v^m$ is a smooth solution to
\begin{equation}
\partial_tw + L_{\epsilon^m}w + (\epsilon^m-\epsilon^n)v^m = F^n-F^m,
	\qquad w(0)=v_0^n-v_0^m.
\end{equation}
Thus taking 
$$
F:=F^n-F^m-(\epsilon^n-\epsilon^m)D^5v^n
$$
in \eqref{UNIFORM} produces
\begin{align} \label{Q0a}
\|w\|_{s,T}
	&\leq c(s,T)\left(\|w_0\|_s^2 + \|F^n-F^m\|_{L^2(0,T;H^{s-3/2} (T))}\right) \notag\\
	&\qquad\qquad
			+ (\epsilon^n-\epsilon^m) \, c(s,T) \|D^{s+5}v^n\|_{L^2(0,T;H^{s-3/2}(T))}.
\end{align}
Applying \eqref{UNIFORM} to $v^n$
\begin{align} \label{Q0b}
&(\epsilon^n-\epsilon^m) \|D^{s+5}v^n\|_{L^2(0,T;H^{s-3/2}(T))} \\
	&\qquad\qquad
		\leq \epsilon^n \|v^n\|_{s+2,T} \notag\\
	&\qquad\qquad
		\leq (\epsilon^n)^{3/10}\,
				c(s,T)\left((\epsilon^n)^{7/20}\|v_0^n\|_{s+2}
				+(\epsilon^n)^{7/20}\|F^n\|_{L^2(0,T;H^{s+2} (T))}\right) \notag\\
	&\qquad\qquad
		\leq (\epsilon^n)^{3/10}\,c(s,T)\left(\|v_0\|_s+ \|F\|_{L^2(0,T;H^{s-3/2} (T))}\right),
\end{align}
where we utilized \eqref{REG2} with $\gamma=7/2$,
$\epsilon^n-\epsilon^m\leq\epsilon^n<1$ and
that $F^n \rightarrow F$ strongly.
Inserting \eqref{Q0b} into \eqref{Q0a} yields
\begin{align*}
\|w\|_{s,T}
	&\leq c(s,T)\left(\|w_0\|_s + \|F^n-F^m\|_{L^2(0,T;H^{s-3/2} (T))}\right) \notag\\
	&\qquad\qquad
			+ (\epsilon^n)^{3/10}\, c(s,T) \left(\|v_0\|+ \|F\|_{L^2(0,T;H^{s-3/2} (T))}\right).
\end{align*}
This proves that $\{v^n\}$ is Cauchy in $Z_{s,T}$, thus
$v^n \rightarrow v$ for some $v \in Z_{s,T}$.
Choosing $n$ large enough,
\begin{align*}
\|v\|_{s,T}
	&\leq \|v^n\|_{s,T} + \delta \\
	&\leq c(s,T)\left(\|v_0^n\|_s + \|F^n\|_{L^2(0,T;H^{s-3/2} (T))}\right) + \delta \\
	&\leq c(s,T)\left(\|v_0\|_s + \|F\|_{L^2(0,T;H^{s-3/2} (T))}\right)
				+ 3\delta,
\end{align*}
and so $v$ satisfies \eqref{ZST}.
Moreover, $v$ is a distributional solution of IVP \eqref{I:LINEAR2}
with $v(\cdot,t)\rightarrow v_0$ strongly in $H^s(\T)$
as $t\rightarrow0$.
Uniqueness and continuous dependence on the initial data follow
easily from \eqref{ZST}.
Finally, \eqref{LINEX} holds as the results of Proposition \ref{P:STAB}
are independent of $0<\epsilon<1$.
\end{proof}

\end{section}

%%%%%%%%%%%%%%%%%%
\begin{section}{Exponential Stabilization}\label{S:3}

This section is concerned with local well-posedness and
stabilization of solutions to the following nonlinear equation
\begin{equation} \label{I:STBL}
\begin{cases}
	\partial_tu -\partial_x^5u + u\partial_x^3u = - GD^3Gu,
		\qquad x\in\T, t\geq0, \\
	u(x,0) = u_0(x).
\end{cases}
\end{equation}
The linear estimates given in Proposition \ref{P:BSA}, when
combined with the contraction principle yield local
well-posedness for small data in $H_0^s(\T)$ for $s>2$.

\begin{theorem} \label{T:LWP_REG}
Suppose $s>2$ and $T>0$.
Then there exists $\rho=\rho(s,T)>0$ such that for any
$u_0 \in H_0^s(\T)$ with $\|u_0\|_s\leq\rho$, the IVP
\eqref{I:STBL} admits a unique solution in the space $Z_{s,T}$.
\end{theorem}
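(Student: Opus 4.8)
The plan is to recast \eqref{I:STBL} in Duhamel form using the linear theory of Proposition \ref{P:BSA} and then run a contraction argument in a small ball of $Z_{s,T}$. For $v_0 \in H_0^s(\T)$ and $F \in L^2(0,T;H_0^{s-3/2}(\T))$, write $v = \Lambda(v_0,F) \in Z_{s,T}$ for the distributional solution of \eqref{I:LINEAR2} furnished by Proposition \ref{P:BSA}, so that
\begin{equation*}
\|\Lambda(v_0,F)\|_{s,T} \leq c(s,T)\big(\|v_0\|_s + \|F\|_{L^2(0,T;H^{s-3/2}(\T))}\big),
\end{equation*}
and set $\Gamma(u) := \Lambda\big(u_0,\,-u\partial_x^3 u\big)$. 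A fixed point of $\Gamma$ in $Z_{s,T}$ is a solution of \eqref{I:STBL}. Integrating by parts twice gives $\int_\T u\partial_x^3 u\,dx = 0$ for $u \in H_0^{s+3/2}(\T)$, so $-u\partial_x^3 u$ is a legitimate zero-mean forcing term for Proposition \ref{P:BSA}, and the same identity shows the equation propagates the mean-zero condition.

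The crux is the nonlinear estimate, and this is the only place where anything must be watched: all of the genuinely hard analysis (propagation of regularity, the $\e\searrow0$ limit) is already internalized in Proposition \ref{P:BSA}, and what remains is essentially a derivative count that dictates $s>2$. Since $s - 3/2 > 1/2$, the space $H^{s-3/2}(\T)$ is a Banach algebra, whence pointwise in $t$
\begin{equation*}
\|u\partial_x^3 u\|_{s-3/2} \lesssim \|u\|_{s-3/2}\,\|\partial_x^3 u\|_{s-3/2} \lesssim \|u\|_{s}\,\|u\|_{s+3/2}.
\end{equation*}
Integrating in time and assigning one factor to $L^\infty_t$ and the other to $L^2_t$ gives
\begin{equation*}
\|u\partial_x^3 u\|_{L^2(0,T;H^{s-3/2})} \lesssim \|u\|_{L^\infty(0,T;H^s)}\,\|u\|_{L^2(0,T;H^{s+3/2})} \leq \|u\|_{s,T}^2,
\end{equation*}
and, using $u\partial_x^3 u - v\partial_x^3 v = (u-v)\partial_x^3 u + v\partial_x^3(u-v)$, the companion Lipschitz bound
\begin{equation*}
\|u\partial_x^3 u - v\partial_x^3 v\|_{L^2(0,T;H^{s-3/2})} \lesssim \big(\|u\|_{s,T}+\|v\|_{s,T}\big)\|u-v\|_{s,T}.
\end{equation*}
The $3/2$ derivatives that the control term contributes to the $L^2_tH^{s+3/2}_x$ component of $Z_{s,T}$ (via Proposition \ref{P:BSA}) offset part of the three derivatives lost in $u\partial_x^3 u$, and the remaining $3/2$ is absorbed by the algebra property of $H^{s-3/2}$; this balance is exactly what forces $s > \tfrac32 + \tfrac12 = 2$, and it is the main (if modest) obstacle. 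Pushing below this threshold would require distributing the derivatives in $u\partial_x^3 u$ more delicately, which is not attempted here.

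Combining these estimates with Proposition \ref{P:BSA} yields, with $c_0 = c_0(s,T)$ nondecreasing in $T$,
\begin{equation*}
\|\Gamma(u)\|_{s,T} \leq c_0\big(\|u_0\|_s + \|u\|_{s,T}^2\big), \qquad \|\Gamma(u)-\Gamma(v)\|_{s,T} \leq c_0\big(\|u\|_{s,T}+\|v\|_{s,T}\big)\|u-v\|_{s,T}.
\end{equation*}
Fixing $R = 1/(4c_0)$ and choosing $\rho = \rho(s,T) > 0$ with $c_0\rho \leq R/2$, one checks that for $\|u_0\|_s \leq \rho$ the map $\Gamma$ sends the closed ball $\{\,\|u\|_{s,T} \leq R\,\}$ into itself and is a contraction there (since $2c_0R = \tfrac12 < 1$), so Banach's fixed point theorem produces a unique fixed point $u$ there, which moreover satisfies $\|u\|_{s,T} \leq c_0\rho + c_0R^2 < R$. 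For uniqueness over all of $Z_{s,T}$, given another solution $\tilde{u}$, let $t_1 \in (0,T]$ be maximal with $\|\tilde{u}\|_{s,t_1} \leq R$ — such $t_1$ exists since $\|\tilde{u}\|_{s,t} \to \|u_0\|_s \leq \rho$ as $t \to 0$, using $\tilde{u} \in C(0,T;H_0^s(\T))$ and absolute continuity of the $L^2$-in-time norm. On $[0,t_1]$ the contraction estimate (with coefficient $c_0(s,t_1) \cdot 2R \leq \tfrac12$) forces $\tilde{u} = u$, hence $\|\tilde{u}\|_{s,t_1} = \|u\|_{s,t_1} < R$, so $t_1 = T$ and $\tilde{u} \equiv u$.
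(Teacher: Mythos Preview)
Your proof is correct and follows essentially the same contraction argument as the paper: Duhamel formulation via Proposition~\ref{P:BSA}, the algebra property of $H^{s-3/2}(\T)$ to estimate $u\partial_x^3u$, and the standard choice of ball radius. Your write-up is in fact slightly more complete than the paper's, since you explicitly verify that the nonlinearity has zero mean and you supply a continuation argument for uniqueness in all of $Z_{s,T}$ rather than only in the contraction ball.
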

\begin{proof}
We write \eqref{I:STBL} in the integral form
\begin{equation*}
u(t)
	= \mathcal{S}(t)u_0
			- \int_0^t \mathcal{S}(t-t')(u\partial_x^3u)(t') \, dt'
	=: \Gamma(u)
\end{equation*}
and show that $\Gamma$ defines a contraction on
$B = \{ v \in Z_{s,T} : \|v\|_{s,T} \leq R \}$ for
appropriate choices of $R>0$ and $\rho>0$.
Note that the restriction $s>2$ ensures that $H^{s-3/2}(\T)$
forms a Banach algebra.
The estimate \eqref{UNIFORM} yields
\begin{equation*}
\|\Gamma(u)\|_{s,T}
	\leq c(s,T)\left( \|u_0\|_s
				+ \|u\partial_x^3u\|_{L^2(0,T;H^{s-3/2}(\T))} \right).
\end{equation*}
Assuming $u \in Z_{s,T}$, then
\begin{align*}
\int_0^T \|u\partial_x^3u\|_{s-3/2}^2 \, dt
	&\leq c\int_0^T \left(\|u\|_{s-3/2}\|\partial_x^3u\|_{s-3/2}\right)^2 \, dt \\
	&\leq c\int_0^T \|u\|_s^2\|u\|_{s+3/2}^2 \, dt \\
	&\leq c\|u\|_{L^\infty(0,T;H^s(\T))}^2\|u\|_{L^2(0,T;H^{s+3/2}(T))}^2 \\
	&\leq c\|u\|_{s,T}^4.
\end{align*}
Therefore
\begin{equation*}
\|\Gamma(u)\|_{s,T} \leq C_0\|u_0\|_s + C_1\|u\|_{s,T}^2
\end{equation*}
for some $C_0,C_1>0$ (which depend on $T$ through estimate
\eqref{UNIFORM}).
Next, assuming $u,v \in Z_{s,T}$ and writing
\begin{equation*}
u\partial_x^3u - v\partial_x^3v
	= (u\partial_x^3u - v\partial_x^3u) + (v\partial_x^3u - v\partial_x^3v),
\end{equation*}
the same estimates as above reveal
\begin{equation*}
\|\Gamma(u)-\Gamma(v)\|_{s,T}
	\leq C_1\left(\|u\|_{s,T}+\|v\|_{s,T}\right)\|u-v\|_{s,T}.
\end{equation*}
Thus $\Gamma$ forms a contraction on $B$ provided
\begin{equation*}
C_0\|u_0\|_s + C_1R^2 < R
	\quad\text{and}\quad
		2C_1R<1.
\end{equation*}
It is sufficient to take
\begin{equation*}
R=(4C_1)^{-1}
	\quad\text{and}\quad
		\|u_0\|_s \leq \rho := R/2C_0.
\end{equation*}
\end{proof}

Following \cite{MR3335395}, the contraction principle is
also used to establish local exponential stability of the
solutions to the IVP \eqref{I:STBL}.
However, the estimates in Proposition \ref{P:BSA}
incorporate only the regularizing effects of the control
term and not any stabilization.
As a result, the $H^s$-estimates \eqref{ZST} possibly
grow in time.
This artifact is avoided by restricting \eqref{ZST}
to (at most) unit length time intervals through use of the spaces
\begin{equation*}
Z_{s,T}([n,n+1])
	:= C([n,n+1];H_0^s(\T)) \cap L^2(n,n+1;H_0^{s+3/2}(\T))
\end{equation*}
endowed with the norm
\begin{equation*}
\vertiii{u}_n
	:= \|u\|_{L^\infty(n,n+1;H^s(\T))}
			+ \|u\|_{L^2(n,n+1;H^{s+3/2}(\T))}.
\end{equation*}
Proposition \ref{P:BSA} leads to the following linear estimates.

\begin{proposition} \label{P_NLH}
Let $0 \leq s \leq 5$ and $v_0 \in H_0^s(\T)$.
Then for some $\lambda,\tilde{c}_0,\tilde{c}_1>0$ independent
of $s$ and $t$,
\begin{equation} \label{LIN_NH}
\vertiii{\mathcal{S}(t)u_0}_n
	\leq \tilde{c}_0 e^{-\lambda t}\|u_0\|_s
\end{equation}
and
\begin{align} \label{LIN_NI}
&\vertiii{\int_0^t \mathcal{S}(t-t')F(t') \, dt'}_n \notag\\
	&\qquad\qquad
	\leq \tilde{c}_1
				\left(\|F\|_{L^2(n,n+1;H^{s-3/2}(\T))}
						+ \sum_{k=1}^n e^{-\lambda(n-k)} \|F\|_{L^2(k-1,k;H^{s-3/2}(\T))}\right).
\end{align}
\end{proposition}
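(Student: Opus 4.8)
The plan is to reduce both estimates to the fixed, unit-length interval $[0,1]$, where Proposition~\ref{P:BSA} applies directly. Two facts make the constants uniform. First, since $c(s,T)$ in \eqref{ZST} is nondecreasing in $T$, we may replace it throughout by the value on $[0,1]$, and since, as noted in the proof of Proposition~\ref{P:STAB}, the relevant constant is nondecreasing in $s$, a single constant $c_*$ dominates $c(s,1)$ for every $0\le s\le 5$. Second, the decay estimate \eqref{LINEX} holds with a constant $C_*$ dominating $C(s)$ for all $0\le s\le 5$ and with a fixed rate $\lambda>0$. Throughout I use the evolution identity $\mathcal{S}(t)=\mathcal{S}(t-\tau)\mathcal{S}(\tau)$ and the corresponding composition rule for Duhamel integrals, which follow from the uniqueness assertion in Proposition~\ref{P:BSA}.

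For \eqref{LIN_NH}, fix $n$. Writing $\mathcal{S}(t)u_0=\mathcal{S}(t-n)\bigl(\mathcal{S}(n)u_0\bigr)$ and viewing $\tau\mapsto\mathcal{S}(\tau)\bigl(\mathcal{S}(n)u_0\bigr)$ as the solution of \eqref{I:LINEAR2} on $[0,1]$ with $F\equiv0$ and datum $\mathcal{S}(n)u_0$, the bound \eqref{ZST} on $[0,1]$ followed by \eqref{LINEX} gives
\begin{equation*}
\vertiii{\mathcal{S}(\cdot)u_0}_n \le c_*\,\|\mathcal{S}(n)u_0\|_s \le c_*C_*\,e^{-\lambda n}\|u_0\|_s .
\end{equation*}
Since $e^{-\lambda n}\le e^{\lambda}e^{-\lambda t}$ for $t\in[n,n+1]$, this yields \eqref{LIN_NH} with $\tilde c_0 := e^{\lambda}c_*C_*$, independent of $s$ and $t$.

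For \eqref{LIN_NI}, set $u(t):=\int_0^t\mathcal{S}(t-t')F(t')\,dt'$ and split, for $t\in[n,n+1]$,
\begin{equation*}
u(t)=\mathcal{S}(t-n)u(n)+\int_n^t\mathcal{S}(t-t')F(t')\,dt'.
\end{equation*}
The second term is the Duhamel solution on $[n,n+1]$ with forcing $F$ and zero datum at time $n$, so \eqref{ZST} on $[0,1]$ bounds its $\vertiii{\cdot}_n$ norm by $c_*\|F\|_{L^2(n,n+1;H^{s-3/2}(\T))}$. For the first term, decompose $u(n)=\sum_{k=1}^n\mathcal{S}(n-k)w_k$ with $w_k:=\int_{k-1}^k\mathcal{S}(k-t')F(t')\,dt'$; by \eqref{ZST} on $[0,1]$ one has $\|w_k\|_s\le c_*\|F\|_{L^2(k-1,k;H^{s-3/2}(\T))}$, whence \eqref{LINEX} gives $\|\mathcal{S}(n-k)w_k\|_s\le C_*c_*e^{-\lambda(n-k)}\|F\|_{L^2(k-1,k;H^{s-3/2}(\T))}$. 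Summing and applying \eqref{ZST} once more to pass from $\|u(n)\|_s$ to $\vertiii{\mathcal{S}(\cdot-n)u(n)}_n$ yields
\begin{equation*}
\vertiii{\mathcal{S}(\cdot-n)u(n)}_n \le c_*^2 C_*\sum_{k=1}^n e^{-\lambda(n-k)}\|F\|_{L^2(k-1,k;H^{s-3/2}(\T))}.
\end{equation*}
Adding the two contributions gives \eqref{LIN_NI} with $\tilde c_1:=\max\{c_*,\,c_*^2C_*\}$.

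The routine parts are the triangle-inequality splittings and the shift-by-$n$ changes of variable. The only genuine point is uniformity of the constants: independence from $s$ comes from the monotonicity in $s$ (and $T$) of the constants in Propositions~\ref{P:STAB} and~\ref{P:BSA}, which is exactly why the hypothesis $0\le s\le5$ is imposed; independence from $n$ (equivalently from $t$) is what the decomposition into unit-length subintervals buys us, since the geometric weights $e^{-\lambda(n-k)}$ keep the accumulated constant bounded instead of letting $c(s,T)$ grow with $T=n+1$. I would also record carefully that the limit semigroup $\mathcal{S}(t)$ of Proposition~\ref{P:BSA} satisfies the evolution and Duhamel composition identities used above, this being a consequence of the uniqueness assertion in that proposition.
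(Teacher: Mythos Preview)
Your proof is correct and follows essentially the same approach as the paper: reduce to unit-length intervals via the semigroup property, apply \eqref{ZST} on $[0,1]$ and \eqref{LINEX} repeatedly, and decompose $u(n)=\sum_{k=1}^n\mathcal{S}(n-k)w_k$ to obtain the geometric sum. Your discussion of constant uniformity in $s$ and the passage from $e^{-\lambda n}$ to $e^{-\lambda t}$ are more explicit than the paper's version, but the underlying argument is the same.
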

\begin{proof}
From \eqref{LINEX}
\begin{equation*}
\|\mathcal{S}(n)u_0\|_s \leq ce^{-\lambda n}\|u_0\|_s
\end{equation*}
so that, combined with \eqref{ZST},
\begin{equation*}
\vertiii{\mathcal{S}(t)u_0}_n \leq \tilde{c}_0e^{-\lambda n}\|u_0\|_s.
\end{equation*}

For the inhomogeneous estimates, write,
\begin{align*}
\vertiii{\int_0^t \mathcal{S}(t-t')F(t') \, dt'}_n
	&\leq \vertiii{\mathcal{S}(t-n)\int_0^n \mathcal{S}(n-t')F(t') \, dt'}_n \\
	&\qquad\qquad
				+ \vertiii{\int_n^t \mathcal{S}(t-t')F(t') \, dt'}_n \\
	&=: I_1 + I_2.
\end{align*}
Applying \eqref{ZST} and \eqref{LINEX} repeatedly over the time
intervals $[0,1],[1,2],\dots,[n-1,n],[n,t]$ yields,
since $n \leq t < n+1$,
\begin{align*}
I_1
	&\leq c\left\|\int_0^n\mathcal{S}(n-t')F(t') \, dt'\right\|_s \\
	&\leq c\sum_{k=1}^n
				\left\|\mathcal{S}(n-k)
						\int_{k-1}^k\mathcal{S}(k-t')F(t') \, dt'\right\|_s \\
	&\leq c\cdot\tilde{c}_0 \sum_{k=1}^n e^{-\lambda(n-k)}
				\left\|\int_{k-1}^k\mathcal{S}(k-t')F(t') \, dt'\right\|_s \\
	&\leq \tilde{c}_1 \sum_{k=1}^n e^{-\lambda(n-k)}
				\|F\|_{L^2(k-1,k,H^{s-3/2}(\T))},
\end{align*}
where we used \eqref{LIN_NH}.
Similarly,
\begin{equation*}
I_2 \leq \tilde{c}_1 \|F\|_{L^2(n,n+1,H^{s-3/2}(\T))},
\end{equation*}
completing the proof.
\end{proof}

We now prove Theorem \ref{T:STAB} under the assumption $[u_0]=0$.
\begin{proof}
We proceed via the contraction principle in the Banach space
\begin{equation*}
X
	:= \{ u \in C(\R^+;H_0^s(\T)) \cap L_{\text{loc}}^2(\R^+;H_0^{s+3/2}(\T)) : \|u\|_E < \infty \}
\end{equation*}
where
\begin{equation*}
\|u\|_X := \sup_{n\geq0} \left\{e^{n\lambda}\vertiii{u}_n\right\}.
\end{equation*}
Writing \eqref{E:KDV5C} in the integral form
\begin{equation*}
u(t)
	= \mathcal{S}(t)u_0
			- \int_0^t \mathcal{S}(t-t')(u\partial_x^3u)(t') \, dt'
	=: \Gamma(u),
\end{equation*}
then \eqref{LIN_NH} and \eqref{LIN_NI} produce
\begin{equation} \label{GAMU}
\|\Gamma(u)\|_X
	\leq \tilde{c}_0\|u_0\|_s
+ \sup_{n\geq0} \tilde{c}_1e^{n\lambda}
\begin{bcases}
  &\|u\partial_x^3u\|_{L^2(n,n+1;H^{s-3/2}(\T))}	 \\
  &+ \sum_{k=1}^n e^{-\lambda(n-k)}
			\|u\partial_x^3u\|_{L^2(k-1,k;H^{s-3/2}(\T))}
\end{bcases}.
\end{equation}
Using the algebra property of $H^{s-3/2}(\T)$,
\begin{align} \label{KEM}
\int_{k-1}^k \|u\partial_x^3u\|_{s-3/2}^2 \, dt
	&\leq c\int_{k-1}^k \left(\|u\|_{s-3/2}\|\partial_x^3u\|_{s-3/2}\right)^2 \, dt \notag\\
	&\leq c\int_{k-1}^k \|u\|_s^2\|u\|_{s+3/2}^2 \, dt \notag\\
	&\leq c\|u\|_{L^\infty(k-1,k;H^s(\T))}^2\|u\|_{L^2(k-1,k;H^{s+3/2}(T))}^2 \notag\\
	&\leq c\vertiii{u}_{k-1}^4.
\end{align}
Assuming $u \in E$, inserting \eqref{KEM} into
\eqref{GAMU}, we have
\begin{align*}
\|\Gamma(u)\|_X
	&\leq \tilde{c}_0 \|u_0\|_s
				+ \sup_{n\geq0}\tilde{c}_1e^{n\lambda}
					\left\{\vertiii{u}_n^2 + \sum_{k=1}^n e^{-\lambda(n-k)} \vertiii{u}_{k-1}^2\right\} \\
	&\leq \tilde{c}_0 \|u_0\|_s
				+ \tilde{c}_1\|u\|_X^2 + \tilde{c}_1\sup_{n\geq0}
					\left\{\sum_{k=1}^n e^{k \lambda} \vertiii{u}_{k-1}^2\right\} \\
	&\leq \tilde{c}_0 \|u_0\|_s
				+ \tilde{c}_1\|u\|_X^2
				+ \tilde{c}_1 e^{2\lambda}\|u\|_X^2
						\left\{\sup_{n\geq0} \, \sum_{k=1}^n e^{-k \lambda} \right\} \\
	&\leq \tilde{c}_0 \|u_0\|_s + \tilde{c}_1(\lambda)\|u\|_X^2.
\end{align*}
Similarly, assuming $u,v \in X$,
\begin{equation*}
\|\Gamma(u)-\Gamma(v)\|_X \leq \tilde{c}_1\left(\|u\|_X+\|v\|_X\right)\|u-v\|_X.
\end{equation*}
Thus $\Gamma$ forms a contraction on
$B = \{ v \in X : \|u\|_X \leq R\}$ provided
\begin{equation*}
\tilde{c}_0\|u_0\|_s + \tilde{c}_1R^2 < R
	\quad\text{and}\quad
		2\tilde{c}_1R<1.
\end{equation*}
It is sufficient to take
\begin{equation*}
R=(4\tilde{c}_1)^{-1}
	\quad\text{and}\quad
		\|u_0\|_s \leq \rho := R/2\tilde{c}_0.
\end{equation*}
\end{proof}

\begin{remark}
We now consider the previous two theorems applied to
\begin{equation} \label{E:KDV5GG}
\partial_tu - \partial_x^5u
	+ c_1u^2\partial_xu + c_2\partial_xu\partial_x^2u
	+ c_3u\partial_x^3u + GD^3Gu
		= 0.
\end{equation}
Observe that the nonlinearity $P(u)$ satisfies
\begin{equation*}
\int_\T P(u) \, dx = 0
\end{equation*}
and so solutions to \eqref{E:KDV5GG} preserve volume.
The  restriction $s>2$ arose from
utilizing the algebra property of $H^{s-3/2}(\T)$
in estimates of the form
\begin{equation} \label{EXT}
\int_0^T \|P(u)\|_{s-3/2}^2 \, dt.
\end{equation}
Hence Theorems \ref{T:STAB} and \ref{T:LWP_REG}
apply to equation \eqref{E:KDV5GG} with the same technique.
In fact, imposing $s>7/2$ and replacing the algebra
property with
\begin{equation*}
\|fg\|_s
	\leq c\left(\|f\|_{L_x^\infty}\|g\|_s
				+ \|f\|_s\|g\|_{L_x^\infty}\right)
\end{equation*}
the theorems extend to an even wider family of fifth order models.
\end{remark}

\begin{remark}
Next it is shown that Theorems \ref{T:STAB} and \ref{T:LWP_REG}
apply to a family of equations containing the KdV hierarchy.
Following Section \ref{S:3}, we see that for each $l\in\Z^+$,
$l\geq2$, the linear equation
\begin{equation*}
\partial_tv + (-1)^{l+1}\partial_x^{2l+1}v = - GD^{2l-1}Gv
\end{equation*}
possesses a unique solution in the space
\begin{equation*}
Z_{s,T}^l = C(0,T;H_0^s(\T)) \cap L^2(0,T;H_0^{s+l-1/2}(\T))
\end{equation*}
which decays exponentially in $H_0^s(\T)$ for $s\geq0$.
The algebra property holds for $H_0^{s-l+1/2}(\T)$
assuming $s>l$, and in this case
\begin{equation*}
\int_0^T \|u\partial_x^{2l-1}u\|_{s-l+1/2}^2 \, dt
	\leq c \|u\|_{L^\infty(0,T;H_0^s(\T))}^2
				\|u\|_{L^2(0,T;H_0^{s+l-1/2}(\T))}^2.
\end{equation*}
Therefore the equation
\begin{equation*}
\partial_tu + (-1)^{l+1}\partial_x^{2l+1}u + u\partial_x^{2l-1}u
    = -GD^{2l-1}Gu
\end{equation*}
is locally well-posed and exponentially stabilizable for
small data in $H_0^s(\T)$, $s>l$.
The nonlinearity $u\partial_x^{2l-1}u$ is the most
difficult to control in the following family studied by
Kenig and Pilod \cite{MR3513119} and Gr\"unrock \cite{MR2653659}:
\begin{equation*} \label{KDVH}
\partial_tu + (-1)^{l+1}\partial_x^{2l+1}u + GD^{2l-1}Gu
	= \sum_{k=2}^{l+1} N_{lk}(u),
\end{equation*}
where
\begin{equation*}
N_{lk}(u)
	= \sum_{|n|=2(l-k)+3}
			c_{l,k,n}\partial_x^{n_0}
				\prod_{i=1}^k \partial_x^{n_i}u,
\end{equation*}
with $|n|=\sum_{i=0}^k n_i$, $n_i\in\N$, for $i=0,\dots,k$
and $c_{l,k,n}\in\R$.
Further imposing $n_0>0$, this describes a family of
volume-preserving equations containing the KdV hierarchy
to which we have extended
Theorems \ref{T:STAB} and \ref{T:LWP_REG}.
\end{remark}

\end{section}

%%%%%%%%%%%%%%%%%%
\begin{section}{Exact Controllability}\label{S:4}

This section is devoted to establishing exact
controllability of the equation
\begin{equation*}
\partial_tu - \partial_x^5u + u\partial_x^3u = Gh,
\end{equation*}
where $h$ is the control input.
Following \cite{MR3335395}, we
incorporate dissipation into the control input in order
to obtain a suitable smoothing effect.
We set
\begin{equation*}
h = -D^3Gu + D^{3/2}k,
\end{equation*}
viewing $k$ as the new control, and focus on the system
\begin{equation} \label{E:KDV5C}
\partial_tu - \partial_x^5u + GD^3Gu + u\partial_x^3u
	= GD^{3/2}k,
		\qquad u(0)=u_0.
\end{equation}
We first establish control of the associated linear system
in $H^s(\T)$ using the the Hilbert Uniqueness Method
(as in \cite{MR2486102}, \cite{MR3335395})
and then apply the contraction principle 
to obtain controllability of \eqref{E:KDV5C}.

% Linear exact control
\begin{proposition} \label{P:LEC}
Let $s\geq0$ and $T>0$.
Then for any $v_0,v_T \in H_0^s(\T)$, there exists
$k \in L^2(0,T;H_0^s(\T))$ such that
\begin{equation} \label{E:KDV5CL}
\partial_tv - \partial_x^5v + GD^3Gv = GD^{3/2}k
\end{equation}
admits a unique solution $v \in Z_{s,T}$ satisfying
$v(0)=v_0$ and $v(T)=v_T$.
\end{proposition}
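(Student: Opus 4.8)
The plan is to establish controllability of the linear equation \eqref{E:KDV5CL} by the Hilbert Uniqueness Method, following \cite{MR2486102,MR3335395}. By linearity it suffices to show that the operator $\Phi\colon L^2(0,T;H_0^s(\T))\to H_0^s(\T)$, $\Phi k := v(T)$ with $v\in Z_{s,T}$ the solution of \eqref{E:KDV5CL} satisfying $v(0)=0$, is surjective; $\Phi$ is well defined and bounded since $GD^{3/2}k\in L^2(0,T;H^{s-3/2}(\T))$ and \eqref{ZST} applies. By the standard duality, surjectivity of $\Phi$ is equivalent to an observability inequality
\begin{equation*}
\|\psi(0)\|_{-s}^2 \leq C\int_0^T \|D^{3/2}G\psi(t)\|_{-s}^2\,dt
\end{equation*}
for solutions $\psi$ of the (time-reversed) adjoint equation $\partial_t\psi+\partial_x^5\psi+GD^3G\psi=0$. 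This equation has exactly the structure of \eqref{E:KDV5CL} up to the inessential sign of $\partial_x^5$, so Propositions \ref{P:UNI}, \ref{P:BSA} and \ref{P:STAB} and the energy identity \eqref{ENERGY} hold for it. I will prove the observability inequality by a compactness--uniqueness argument; uniqueness of the resulting controlled solution in $Z_{s,T}$ is then immediate from Proposition \ref{P:BSA}.

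For $s=0$ the argument runs as follows. If observability fails, there is a sequence of solutions $\psi^n$ with $\|\psi^n(0)\|=1$ and $\int_0^T\|D^{3/2}G\psi^n\|^2\,dt\to0$. By \eqref{ZST} with $F=0$ the $\psi^n$ are bounded in $Z_{0,T}$, so from the equation $\partial_t\psi^n$ is bounded in $L^2(0,T;H^{-7/2}(\T))$; the Aubin--Lions lemma yields a subsequence with $\psi^n\to\psi$ strongly in $L^2(0,T;L^2(\T))$ and weak-$\ast$ in $L^\infty(0,T;L^2(\T))$. Integrating the energy identity $\frac{d}{dt}\|\psi^n\|^2=-2\|D^{3/2}G\psi^n\|^2$ twice in time gives
\begin{equation*}
\|\psi^n(0)\|^2 \leq \frac1T\int_0^T\|\psi^n(t)\|^2\,dt + 2\int_0^T\|D^{3/2}G\psi^n\|^2\,dt,
\end{equation*}
and letting $n\to\infty$ forces $\|\psi\|_{L^2(0,T;L^2(\T))}^2\geq T$, so $\psi\not\equiv0$. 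On the other hand $D^{3/2}G\psi^n\to0$ in $L^2(0,T;L^2(\T))$ together with the strong convergence of $\psi^n$ gives $G\psi=0$ a.e.; hence $\psi(x,t)=\int_\T g(y)\psi(y,t)\,dy=:c(t)$ for $(x,t)\in\omega\times(0,T)$, and since $G\psi=0$ annihilates the damping term, $\psi$ solves $\partial_t\psi+\partial_x^5\psi=0$. Choosing an interval $(a,b)\subset\omega$ and applying Proposition \ref{P:UCP} (in the reflected form $\partial_t v-\partial_x^5v=0$ obtained via $x\mapsto-x$, which is \eqref{UCP} with $\epsilon=0$) we conclude $\psi\equiv0$, a contradiction.

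For general $s\geq0$ one repeats this scheme at the level of $w=D^{-s}\psi$, which solves the same equation plus an order-two remainder $E$ as in the proof of Proposition \ref{P:UNI}; the contribution of $E$ is absorbed exactly as in \eqref{N11}--\eqref{N12}, while the auxiliary terms that must be sent to zero (those involving $\psi^n$ in intermediate Sobolev norms) are handled by interpolating the strong $L^2(0,T;H^{-s}(\T))$ convergence against the uniform $L^2(0,T;H^{-s+3/2}(\T))$ bound from \eqref{ZST}. Proposition \ref{P:UCP} again applies to the limit after a preliminary propagation-of-regularity step, as in its own proof, placing the limiting solution in $L^2(0,T;H_0^0(\T))$. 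Once the observability inequality is known, the HUM operator $\Gamma:=\int_0^T\mathcal{S}(T-t)GD^3G\mathcal{S}(T-t)^*\,dt$ is coercive, hence invertible, on the relevant space, and the control $k(t)=D^{3/2}G\mathcal{S}(T-t)^*\Gamma^{-1}(\mathcal{S}(T)v_0-v_T)$ lies in $L^2(0,T;H_0^s(\T))$ and steers $v_0$ to $v_T$ (that $\Gamma$ respects the Sobolev scale again follows from \eqref{ZST}, as in \cite{MR3335395}).

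The main obstacle is the observability inequality, and within it the passage to the limit: the essential uniqueness input is Proposition \ref{P:UCP}, which is already available, while the remaining delicate point is the bookkeeping of Sobolev exponents forced by the $3/2$-derivative gain together with the nonlocal third-order damping $GD^3G$ and the second-order remainder $E$---in particular, checking that the weak limit is regular enough to invoke Proposition \ref{P:UCP}. Beyond this I do not expect any difficulty not already met in Section \ref{S:2}.
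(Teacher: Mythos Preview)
Your approach is correct and follows the same overall strategy as the paper: HUM reduces controllability to an observability inequality for the adjoint system, and this inequality is established by a compactness--uniqueness argument terminating in Proposition~\ref{P:UCP}. For $s=0$ your argument and the paper's are essentially identical (the paper uses the multiplier $tu$ to obtain \eqref{ENERGY2} rather than your double time-integration, which is cosmetic).

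For $s>0$ the organization differs. The paper proceeds through three nested compactness--uniqueness arguments: first an observability inequality \eqref{OBS4_A} for $w=D^{-s}u$ with the commutator term $\|Ew\|_{-3/2}$ still on the right, then an inequality \eqref{OBS4_B} for $u$ in $H^{-s}$ carrying the compact remainder $\|u_T\|_{-s-1}$, and finally the clean inequality \eqref{OBS4}. You instead run a single compactness argument: show directly that the weak limit $\psi$ vanishes via Proposition~\ref{P:UCP}, and then use the resulting strong convergence $\psi^n\to0$ in $L^2(0,T;H^{-s+\alpha})$ for every $\alpha<3/2$ to send all remainder terms (the $(w^n,Ew^n)$ contribution and the commutator $[D^{-s},G]$) to zero in the final energy identity. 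This is more economical, but it obliges you to bootstrap the regularity of the limit from $L^2(0,T;H^{-s+3/2})$ up to $L^2(0,T;H_0^0)$ before Proposition~\ref{P:UCP} can be invoked when $s>3/2$; the paper's layered structure sidesteps this because each of its three contradictions applies UCP to an object already at $L^2$ regularity. Both routes are sound.
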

\begin{proof}
%% STEP 1 - H_0^0 controllability
\noindent
{\it (Case $s=0$.)} \\
%%%%%%%%%%%%%%%%%%%%%%
Note that for
$v_0 \in H_0^s(\T)$ and $k \in L^2(0,T;H_0^s(\T))$
the solution to \eqref{E:KDV5CL} lies in $Z_{s,T}$.
We associate to this equation the adjoint system
\begin{equation} \label{E:KDV5CL*}
-\partial_tu + \partial_x^5u + GD^3Gu = 0,
	\qquad
	u(T) = u_T.
\end{equation}
Assuming $v_0, u_T \in H_0^5(\T)$
and $k \in L^2(0,T;H_0^{13/2}(\T))$ to justify the
computations, scaling \eqref{E:KDV5CL} by $u$ yields
\begin{equation} \label{DUAL}
\int_\T uv \, dx \Big|_0^T
	= \int_0^T \int_\T k D^{3/2}(Gu) \, dxdt,
\end{equation}
assuming $u_0=0$.
Hence duality implies that exact controllability
of \eqref{E:KDV5CL} follows from an observability
inequality
\begin{equation} \label{OBS3}
\|u_T\|^2 \leq C \int_0^T \|D^{3/2}(Gu)\|^2 \, dt
\end{equation}
for solutions to \eqref{E:KDV5CL*}.

Demonstrating \eqref{OBS3} requires a few properties
of these solutions. Note that scaling the adjoint
equation \eqref{E:KDV5CL*} by $tu$ yields
\begin{equation} \label{ENERGY2}
\frac{T}{2}\|u_T\|^2
	= \frac12 \int_0^T \|u(t)\|^2 \, dt
			+ \int_0^T t \|D^{3/2}(Gu)\|^2 \, dt.
\end{equation}
Moreover, a propagation of regularity argument
similar to Proposition \ref{P:UNI} (changing $t$
to $T-t$) shows that solutions
to \eqref{E:KDV5CL*} satisfy
\begin{equation} \label{PROP}
\int_0^T \|D^{3/2}u\|^2 \, dt \leq C(T) \|u_T\|^2.
\end{equation}

We now demonstrate \eqref{OBS3}.
Proceeding by contradiction, suppose there is a
sequence $\{u_T^n\}$ in $H_0^0(\T)$ such that
\begin{equation} \label{OBS3b}
1 = \|u_T^n\|^2 > n \int_0^T \|D^{3/2}(Gu^n)\|^2 \, dt.
\end{equation}
with $u^n$ denoting the solution to \eqref{E:KDV5CL*}
corresponding to data $u_T^n$.
Using the equation \eqref{E:KDV5CL*} and \eqref{PROP},
the sequence $\{u^n\}$ is seen to be bounded in
\begin{equation*}
L^2(0,T;H_0^{3/2}(\T)) \cap H^1(0,T;H_0^0(\T)).
\end{equation*}
The Aubin-Lions lemma implies the existence of a
subsequence (still denoted $u^n$) converging strongly
to a limit $u$ in $L^2(0,T;H_0^0(\T))$.

Next, we verify that $\{u_T^n\}$ is Cauchy in $H_0^0(\T)$.
Estimate \eqref{ENERGY2} applied to the difference of
two solutions yields
\begin{align*}
\|u_T^n-u_T^m\|
	&\leq \frac1T \int_0^T \|u^n-u^m\|^2 \, dt
				+ 2 \int_0^T \|D^{3/2}G(u^n-u^m)\|^2 \, dt \\
	&\leq \frac1T \int_0^T \|u^n-u^m\|^2 \, dt
				+ 4\left(\frac1n+\frac1m\right)
\end{align*}
after applying \eqref{OBS3b}.
Thus $u_T^n \rightarrow u_T$ strongly in $H_0^0(\T)$ and
it follows that the solution of \eqref{E:KDV5CL*}
associated to $u_T$ agrees with the limit $u$ of the
sequence $\{u^n\}$.
Letting $n\rightarrow \infty$ in \eqref{OBS3} we have that
\begin{equation*}
\int_0^T \|D^{3/2}(Gu)\|^2 \, dt = 0.
\end{equation*}
Hence $Gu=0$ a.e. $\T\times(0,T)$ and using \eqref{CTRL} we may write
\begin{equation*}
u(x,t) = \int_\T g(y) u(y,t) \, dy := c(t)
	\qquad \text{for all $(x,t)\in\omega\times(0,T),$}
\end{equation*}
where $\omega=\{ x\in\T:g(x)>0\}$ and $c \in L^\infty(0,T)$.
Thus $u$ satisfies the hypothesis of Proposition \ref{P:UCP},
implying that $u\equiv 0$ and contradicting the fact that
$\|u_T\|=\|u_T^n\|=1$.

%% STEP 2 - H_0^s controllability
\noindent
{\it (Case $s>0$.)} \\
%%%%%%%%%%%%%%%%%%%%%%
As the IVP \eqref{E:KDV5CL*} is well-posed backwards
in time, we have
\begin{equation} \label{ZST_NEG}
\|u\|_{-s,T} \leq c(s,T)\|u_T\|_{-s}.
\end{equation}
As in \eqref{DUAL}, scaling \eqref{E:KDV5CL} by a solution
$u$ to \eqref{E:KDV5CL*} and supposing $v_0=0$, then
\begin{equation} \label{DUAL2}
\langle u_T, v(T) \rangle_{-s,s}
	= \int_0^T \langle D^{3/2}(Gu),k \rangle_{-s,s} \, dt,
\end{equation}
where $\langle \cdot,\cdot \rangle_{-s,s}$ denotes the
pairing $\langle \cdot,\cdot \rangle_{H_0^{-s}(\T),H_0^s(\T)}$.
Thus is suffices to prove the following observability inequality
\begin{equation} \label{OBS4}
\|u_T\|_{-s}^2 \leq C \int_0^T \|D^{3/2}(Gu)\|_{-s}^2 \, dt
\end{equation}
for solutions $u$ to \eqref{E:KDV5CL*}.

%%%
We first show that $w=D^{-s}u$ satisfies
\begin{equation} \label{OBS4_A}
\|w_T\|^2
	\leq C \left( \int_0^T \|D^{3/2}(Gw)\|^2 \, dt
				+ \int_0^T \|Ew\|_{-3/2}^2 \, dt \right)
\end{equation}
where
\begin{equation} \label{E:KDV5CLS*}
-\partial_tw + \partial_x^5w + GD^3Gw = Ew,
	\qquad w(T)= w_T := D^{-s}u_T
\end{equation}
and $E := D^{-s}[D^s;GD^3G]$.
To obtain a contradiction, suppose there is a sequence
$\{w_T^n\}$ in $H_0^0(\T)$ such that
\begin{equation} \label{OBS4_AF}
1 = \|w_T^n\|^2
	> n \left( \int_0^T \|D^{3/2}(Gw^n)\|^2 \, dt
				+ \int_0^T \|Ew^n\|_{-3/2}^2 \, dt \right),
\end{equation}
with $w^n$ denoting the solution to \eqref{E:KDV5CLS*}
corresponding to $w(T)=w_T^n$.
Then \eqref{ZST_NEG}, along with the equation satisfied
by $w^n$, implies that the sequence $\{w^n\}$ is bounded in
\begin{equation*}
L^2(0,T;H_0^{3/2}(\T)) \cap H^1(0,T;H_0^{-7/2}(\T)).
\end{equation*}
The Aubin-Lions lemma implies the existence of a 
subsequences (still denoted $w^n$) converging strongly
to a limit $w$ in $L^2(0,T;H_0^0(\T))$.
Next we verify that $\{w_T^n\}$ is Cauchy in $H_0^0(\T)$.
Scaling equation \eqref{E:KDV5CLS*} by $tw$ yields
\begin{equation} \label{ENERGY3}
\frac{T}{2}\|w_T\|^2
	= \frac12 \int_0^T \|w(t)^2\| \, dt
			+ \int_0^T t \|D^{3/2}(Gw)\|^2 \, dt
			+ \int_0^T\int_\T twEw \, dxdt,
\end{equation}
which also applies to the difference of two solutions
so that
\begin{align*}
\|w_T^n-w_T^m\|
	&\leq \frac1T \int_0^T \|w^n-w^m\|^2 \, dt
				+ 2 \int_0^T \|D^{3/2}G(w^n-w^m)\|^2 \, dt \\
	&\qquad\qquad
				+ 2 \int_0^T \|w^n-w^m\|_{3/2}\|\|E(w^n-w^m)\|_{-3/2} \, dt \\
	&\leq \frac1T \int_0^T \|w^n-w^m\|^2 \, dt \\
	&\qquad\qquad
				+ 4\left(\int_0^T \|D^{3/2}(Gw^n)\|^2 \, dt
						+ \int_0^T \|D^{3/2}(Gw^m)\|^2 \, dt \right) \\
	&\qquad\qquad
				+ \delta \int_0^T \|w^n-w^m\|_{3/2}^2 \, dt \\
	&\qquad\qquad
				+ c(\delta) \left( \int_0^T \|Ew^n\|_{-3/2}^2 \, dt
						+ \int_0^T \|Ew^m\|_{-3/2}^2 \, dt \right).
\end{align*}
Choosing $\delta>0$ small enough, the claim follows from
\eqref{ZST_NEG}, \eqref{OBS4_AF} and the strong convergence
of $w^n$ in $L^2(0,T;H_0^0(\T))$.
Thus $w_T^n \rightarrow w_T$ strongly in $H_0^0(\T)$ and
it follows that the solution of \eqref{E:KDV5CLS*}
associated to $w_T$ agrees with the limit $w$ of the
sequence $\{w^n\}$.
Letting $n\rightarrow \infty$ in \eqref{OBS4_AF} we have that
\begin{equation*}
\int_0^T \|D^{3/2}(Gw)\|^2 \, dt = 0.
\end{equation*}
Hence $Gw=0$ a.e. $\T\times(0,T)$ and an application of
Proposition \ref{P:UCP} implies that $w\equiv 0$, thus
contradicting the fact that $\|w_T\|=\|w_T^n\|=1$.	
Thus \eqref{OBS4_A} holds.

%%%
We now prove the following estimate of solutions $u$
to equation \eqref{E:KDV5CL*}
\begin{equation} \label{OBS4_B}
\|u_T\|_{-s}^2
	\leq C \left( \int_0^T \|D^{3/2}(Gu)\|_{-s}^2 \, dt,
				+ \|u_T\|_{-s-1}^2 \right),
\end{equation}
from which \eqref{OBS4} will follow.
To obtain a contradiction, suppose there is a sequence
$\{u_T^n\}$ in $H_0^{-s}(\T)$ such that
\begin{equation} \label{OBS4_BF}
1 = \|u_T^n\|_{-s}^2
	> n \left( \int_0^T \|D^{3/2}(Gu^n)\|_{-s}^2 \, dt
				+ \|u_T^n\|_{-s-1}^2 \right),
\end{equation}
with $u^n$ denoting the solution to \eqref{E:KDV5CL*}
corresponding to $u(T)=u_T^n$.
This implies $u_T^n\rightarrow0$ strongly in
$H_0^{-s-1}(\T)$ and so $u^n\rightarrow0$ in $Z_{-s-1,T}$.
Then
\begin{align*}
\int_0^T \|GD^{-s}u^n\|_{3/2}^2 \, dt
	&\leq c\left(\int_0^T \|D^{3/2}(Gu^n)\|_{-s}^2 \, dt \right. \\
	&\qquad\qquad
		\left. + \int_0^T \|D^{3/2}[D^{-s};G]u^n\|^2 \, dt,
				\vphantom{\int_0^T}\right)
\end{align*}
where the first term on the right-hand side tends towards zero
by \eqref{OBS4_BF}.
Applying commutator estimate \eqref{CTRL3},
\begin{align*}
\int_0^T \|D^{3/2}[D^{-s};G]u^n\|^2 \, dt
	&\leq c\int_0^T \|D^{-s+1/2}u^n\|^2 \, dt \\
	&\leq c(T)\|u_T^n\|_{-s-1}^2
\end{align*}
by the propagation of regularity result for IVP \eqref{E:KDV5CL*}.
Therefore
\begin{equation*}
\int_0^T \|GD^{-s}u^n\|_{3/2}^2 \, dt \rightarrow 0
	\qquad\text{as}\qquad
		n\rightarrow\infty.
\end{equation*}
Inserting this into \eqref{OBS4_A} and using that
$u^n\rightarrow0$ in $Z_{-s-1,T}$ we conclude that
$u_T^n\rightarrow0$ in $H_0^{-s}(\T)$, thus
contradicting the fact that $\|u_T\|_{-s}=\|u_T^n\|_{-s}=1$.	
Thus \eqref{OBS4_B} holds.

%%%
We now show that \eqref{OBS4_B} implies \eqref{OBS4}.
To obtain a contradiction, suppose there is a sequence
$\{u_T^n\}$ in $H_0^{-s}(\T)$ such that
\begin{equation} \label{OBS4_F}
1 = \|u_T^n\|_{-s}^2
	> n \left( \int_0^T \|D^{3/2}(Gu^n)\|_{-s}^2 \, dt \right),
\end{equation}
with $u^n$ denoting the solution to \eqref{E:KDV5CL*}
corresponding to $u(T)=u_T^n$.
By compactness of the embedding
$H_0^{-s}(\T) \hookrightarrow H_0^{-s-1}(\T)$ then
$u_T^n \rightarrow u_T$ in $H_0^{-s-1}(\T)$.
Applying \eqref{OBS4_B} to the difference of two solutions,
\begin{align*}
\|u_T^n-u_T^m\|_{-s}^2
	&\leq C\left( \int_0^T \|D^{3/2}G(u^n-u^m)\|_{-s}^2 \, dt
				+ \|u_T^n-u_T^m\|_{-s-1}^2\right) \\
	&\leq C\left( \int_0^T \|D^{3/2}(Gu^n)\|_{-s}^2 \, dt
				+ \int_0^T \|D^{3/2}(Gu^m)\|_{-s}^2 \, dt\right) \\
	&\qquad\qquad
			+ C\|u_T^n-u_T^m\|_{-s-1}^2.
\end{align*}
Combining this with \eqref{OBS4_F} implies that
$u_T^n \rightarrow u_T$ strongly in $H_0^{-s}(\T)$.
Letting $n\rightarrow \infty$ in \eqref{OBS4_F} we have that
\begin{equation*}
\int_0^T \|D^{3/2}(Gu)\|^2 \, dt = 0,
\end{equation*}
with $u$ denoting the solution to \eqref{E:KDV5CL*}
corresponding to $u(T)=u_T$.
Hence $Gu=0$ a.e. $\T\times(0,T)$ and an application of
Proposition \ref{P:UCP} implies that $u\equiv 0$, thus
contradicting the fact that $\|u_T\|_{-s}=\|u_T^n\|_{-s}=1$.	
Thus \eqref{OBS4} holds.
\end{proof}

We are now able to prove Theorem \ref{T:CONT}, local exact
control of the nonlinear equation \eqref{E:K5F}.
As in the remarks following the proof of Theorem \ref{T:STAB},
the results in this section apply to equation \eqref{E:K5F}
as well as a class of equations containing the KdV hierarchy.
\begin{proof}
For each $s\geq0, T>0$, Lemma \ref{P:LEC} provides the
existence of a continuous linear operator \cite[Lemma
2.48, p. 58]{MR2302744}
\begin{equation*}
\Lambda : H_0^s(\T) \rightarrow L^2(0,T;H_0^s(\T))
\end{equation*}
such that given $v_T \in H_0^s(\T)$, the solution $v$
of \eqref{E:KDV5CL} associated to $v_0=0$ and $k=\Lambda(v_T)$
satisfies $v(T)=v_T$.
Denote this solution by
\begin{equation*}
W(k)(t) = v(t) = \int_0^T \mathcal{S}(t-t')GD^{3/2}k(t') \, dt'.
\end{equation*}
From Proposition \ref{P:BSA}, it holds that
$W : L^2(0,T;H_0^s(\T)) \rightarrow Z_{s,T}$ is continuous.

Let $u_0,u_T \in H_0^s(\T)$, $s>2$, with
\begin{equation*}
\|u_0\|_s\leq\rho
	\qquad\text{and}\qquad
		\|u_T\|_s\leq\rho,
\end{equation*}
for some $\rho>0$ to be determined.
For $v \in Z_{s,T}$, set
\begin{equation*}
\omega(v) = \int_0^t \mathcal{S}(t-t')(v\partial_x^3v)(t') \, dt'
\end{equation*}
and note that Proposition~\ref{P:BSA} yields
\begin{equation} \label{OMEGA}
\|\omega(v)\|_s
	\leq c\left(\int_0^T\|v\partial_x^3v\|_{s-3/2}^2\right)^{1/2}
	\leq c\|v\|_{s,T}^2,
\end{equation}
by the algebra property of $H^{s-3/2}(\T)$ for $s>2$.
Thus $\omega(v)(T) \in H_0^s(\T)$.
Defining
\begin{align*}
\Gamma(v)
	&:= \mathcal{S}(t)u_0
			- \int_0^t \mathcal{S}(t-t')(v\partial_x^3v)(t') \, dt' \\
	&\qquad\qquad
			+ W\left(\Lambda(u_T-\mathcal{S}(T)u_0+\omega(v)(T))\right),
\end{align*}
it is clear that $\Gamma(v)(0)=u_0$ and $\Gamma(v)(T)=u_T$ for
any $v \in Z_{s,T}$.
Thus it suffices to establish a fixed point of the nonlinear
map $v\mapsto\Gamma(v)$ in a closed ball in $Z_{s,T}$.

Repeating the argument of the proof of Theorem~\ref{T:LWP_REG},
we show that $\Gamma$ defines a contraction on
$B = \{ v \in Z_{s,T} : \|v\|_{s<T} \leq R \}$ for
appropriate choices of $R>0$ and $\rho>0$.
The estimate \eqref{UNIFORM} yields
\begin{align*}
\|\Gamma(u)\|_{s,T}
	&\leq c(s,T)\left( \|u_0\|_s
				+ \|u\partial_x^3u\|_{L^2(0,T;H^{s-3/2}(\T))} \right) \\
	&\qquad\qquad
				+ \|W\left(\Lambda(u_T-\mathcal{S}(T)u_0+\omega(v)(T))\right)\|_{s,T}.
\end{align*}
Assuming $u \in Z_{s,T}$, then by the algebra property of
$H_0^{s-3/2}(\T)$,
\begin{align*}
\int_0^T \|u\partial_x^3u\|_{s-3/2}^2 \, dt
	&\leq c\|u\|_{s,T}^4.
\end{align*}
Estimate \eqref{OMEGA}, along with the continuity of
$\Gamma$ and $W$, yields
\begin{equation*}
\|W\left(\Lambda(u_T-\mathcal{S}(T)u_0+\omega(v)(T))\right)\|_{s,T}
	\leq C_0(\|u_0\|_s+\|u_T\|_s) + C_1\|u\|_{s,T}^2
\end{equation*}
Therefore
\begin{equation*}
\|\Gamma(u)\|_{s,T} \leq C_0(\|u_0\|_s+\|u_T\|_s) + C_1\|u\|_{s,T}^2
\end{equation*}
for some $C_0,C_1>0$.
Similarly,
\begin{equation*}
\|\Gamma(u)-\Gamma(v)\|_{s,T}
	\leq C_1\left(\|u\|_{s,T}+\|v\|_{s,T}\right)\|u-v\|_{s,T}.
\end{equation*}
Thus $\Gamma$ forms a contraction on $B$ provided
\begin{equation*}
C_0(\|u_0\|_s+\|u_T\|_s) + C_1R^2 < R
	\quad\text{and}\quad
		2C_1R<1.
\end{equation*}
It is sufficient to take
\begin{equation*}
R=(4C_1)^{-1}
	\quad\text{and}\quad
		\|u_0\|_s \leq \rho := R/2C_0.
\end{equation*}
\end{proof}

\end{section}

\vskip3mm
\noindent {\bf Acknowledgments.}
The authors would like to thank Prof. Felipe Linares
and Prof. Lionel Rosier for reading a draft of this work.

\def\cprime{$'$}


\begin{thebibliography}{10}

\bibitem{MR2834850}
F.~D. Araruna, R.~A. Capistrano-Filho, and G.~G. Doronin.
\newblock Energy decay for the modified {K}awahara equation posed in a bounded
  domain.
\newblock {\em J. Math. Anal. Appl.}, 385(2):743--756, 2012.

\bibitem{MR0463715}
D.~J. Benney.
\newblock A general theory for interactions between short and long waves.
\newblock {\em Studies in Appl. Math.}, 56(1):81--94, 1976/77.

\bibitem{MR0385355}
J.~L. Bona and R.~Smith.
\newblock The initial-value problem for the {K}orteweg-de {V}ries equation.
\newblock {\em Philos. Trans. Roy. Soc. London Ser. A}, 278(1287):555--601,
  1975.

\bibitem{MR1209299}
J.~Bourgain.
\newblock Fourier transform restriction phenomena for certain lattice subsets
  and applications to nonlinear evolution equations. {I}. {S}chr\"odinger
  equations.
\newblock {\em Geom. Funct. Anal.}, 3(2):107--156, 1993.

\bibitem{MR1215780}
J.~Bourgain.
\newblock Fourier transform restriction phenomena for certain lattice subsets
  and applications to nonlinear evolution equations. {II}. {T}he
  {K}d{V}-equation.
\newblock {\em Geom. Funct. Anal.}, 3(3):209--262, 1993.

\bibitem{MR1969209}
J.~Colliander, M.~Keel, G.~Staffilani, H.~Takaoka, and T.~Tao.
\newblock Sharp global well-posedness for {K}d{V} and modified {K}d{V} on
  {$\Bbb R$} and {$\Bbb T$}.
\newblock {\em J. Amer. Math. Soc.}, 16(3):705--749, 2003.

\bibitem{MR2302744}
J.-M. Coron.
\newblock {\em Control and nonlinearity}, volume 136 of {\em Mathematical
  Surveys and Monographs}.
\newblock American Mathematical Society, Providence, RI, 2007.

\bibitem{MR2177163}
W.~Craig, P.~Guyenne, and H.~Kalisch.
\newblock Hamiltonian long-wave expansions for free surfaces and interfaces.
\newblock {\em Comm. Pure Appl. Math.}, 58(12):1587--1641, 2005.

\bibitem{MR2253466}
B.~Dehman, P.~G\'erard, and G.~Lebeau.
\newblock Stabilization and control for the nonlinear {S}chr\"odinger equation
  on a compact surface.
\newblock {\em Math. Z.}, 254(4):729--749, 2006.

\bibitem{MR2434910}
G.~G. Doronin and N.~A. Larkin.
\newblock Kawahara equation in a bounded domain.
\newblock {\em Discrete Contin. Dyn. Syst. Ser. B}, 10(4):783--799, 2008.

\bibitem{MR3462574}
G.~Gao and S.-M. Sun.
\newblock A {K}orteweg--de {V}ries type of fifth-order equations on a finite
  domain with point dissipation.
\newblock {\em J. Math. Anal. Appl.}, 438(1):200--239, 2016.

\bibitem{Gardner1967}
C.~S. Gardner, J.~M. Greene, M.~D. Kruskal, and R.~M. Miura.
\newblock Method for solving the {K}orteweg-de {V}ries equation.
\newblock {\em Phys. Rev. Lett.}, 19(19):1095--1097, 1967.

\bibitem{MR2569891}
O.~Glass and S.~Guerrero.
\newblock On the controllability of the fifth-order {K}orteweg-de {V}ries
  equation.
\newblock {\em Ann. Inst. H. Poincar\'e Anal. Non Lin\'eaire},
  26(6):2181--2209, 2009.

\bibitem{MR2653659}
A.~Gr\"unrock.
\newblock On the hierarchies of higher order m{K}d{V} and {K}d{V} equations.
\newblock {\em Cent. Eur. J. Math.}, 8(3):500--536, 2010.

\bibitem{MR2531556}
Z.~Guo.
\newblock Global well-posedness of {K}orteweg-de {V}ries equation in
  {$H^{-3/4}(\Bbb R)$}.
\newblock {\em J. Math. Pures Appl. (9)}, 91(6):583--597, 2009.

\bibitem{MR3096990}
Z.~Guo, C.~Kwak, and S.~Kwon.
\newblock Rough solutions of the fifth-order {K}d{V} equations.
\newblock {\em J. Funct. Anal.}, 265(11):2791--2829, 2013.

\bibitem{MR610244}
D.~Henry.
\newblock {\em Geometric theory of semilinear parabolic equations}, volume 840
  of {\em Lecture Notes in Mathematics}.
\newblock Springer-Verlag, Berlin-New York, 1981.

\bibitem{MR759907}
T.~Kato.
\newblock On the {C}auchy problem for the (generalized) {K}orteweg-de {V}ries
  equation.
\newblock In {\em Studies in applied mathematics}, volume~8 of {\em Adv. Math.
  Suppl. Stud.}, pages 93--128. Academic Press, New York, 1983.

\bibitem{MR2767079}
T.~Kato.
\newblock Local well-posedness for {K}awahara equation.
\newblock {\em Adv. Differential Equations}, 16(3-4):257--287, 2011.

\bibitem{MR2989690}
T.~Kato.
\newblock Global well-posedness for the {K}awahara equation with low
  regularity.
\newblock {\em Commun. Pure Appl. Anal.}, 12(3):1321--1339, 2013.

\bibitem{Kawahara1972}
T.~Kawahara.
\newblock Oscillatory solitary waves in dispersive media.
\newblock {\em Journal of the Physical Society of Japan}, 33(1):260--264, 1972.

\bibitem{MR3301874}
C.~E. Kenig and D.~Pilod.
\newblock Well-posedness for the fifth-order {K}d{V} equation in the energy
  space.
\newblock {\em Trans. Amer. Math. Soc.}, 367(4):2551--2612, 2015.

\bibitem{MR3513119}
C.~E. Kenig and D.~Pilod.
\newblock Local well-posedness for the {K}d{V} hierarchy at high regularity.
\newblock {\em Adv. Differential Equations}, 21(9-10):801--836, 2016.

\bibitem{MR1211741}
C.~E. Kenig, G.~Ponce, and L.~Vega.
\newblock Well-posedness and scattering results for the generalized
  {K}orteweg-de {V}ries equation via the contraction principle.
\newblock {\em Comm. Pure Appl. Math.}, 46(4):527--620, 1993.

\bibitem{MR1195480}
C.~E. Kenig, G.~Ponce, and L.~Vega.
\newblock Higher-order nonlinear dispersive equations.
\newblock {\em Proc. Amer. Math. Soc.}, 122(1):157--166, 1994.

\bibitem{MR1321214}
C.~E. Kenig, G.~Ponce, and L.~Vega.
\newblock On the hierarchy of the generalized {K}d{V} equations.
\newblock In {\em Singular limits of dispersive waves ({L}yon, 1991)}, volume
  320 of {\em NATO Adv. Sci. Inst. Ser. B Phys.}, pages 347--356. Plenum, New
  York, 1994.

\bibitem{MR2501679}
N.~Kishimoto.
\newblock Well-posedness of the {C}auchy problem for the {K}orteweg-de {V}ries
  equation at the critical regularity.
\newblock {\em Differential Integral Equations}, 22(5-6):447--464, 2009.

\bibitem{MR3473453}
C.~Kwak.
\newblock Local well-posedness for the fifth-order {K}d{V} equations on
  {$\Bbb{T}$}.
\newblock {\em J. Differential Equations}, 260(10):7683--7737, 2016.

\bibitem{MR2455780}
S.~Kwon.
\newblock On the fifth-order {K}d{V} equation: local well-posedness and lack of
  uniform continuity of the solution map.
\newblock {\em J. Differential Equations}, 245(9):2627--2659, 2008.

\bibitem{MR2654198}
C.~Laurent.
\newblock Global controllability and stabilization for the nonlinear
  {S}chr\"odinger equation on an interval.
\newblock {\em ESAIM Control Optim. Calc. Var.}, 16(2):356--379, 2010.

\bibitem{MR2644360}
C.~Laurent.
\newblock Global controllability and stabilization for the nonlinear
  {S}chr\"odinger equation on some compact manifolds of dimension 3.
\newblock {\em SIAM J. Math. Anal.}, 42(2):785--832, 2010.

\bibitem{MR2753618}
C.~Laurent, L.~Rosier, and B.-Y. Zhang.
\newblock Control and stabilization of the {K}orteweg-de {V}ries equation on a
  periodic domain.
\newblock {\em Comm. Partial Differential Equations}, 35(4):707--744, 2010.

\bibitem{MR0235310}
P.~D. Lax.
\newblock Integrals of nonlinear equations of evolution and solitary waves.
\newblock {\em Comm. Pure Appl. Math.}, 21:467--490, 1968.

\bibitem{MR3335395}
F.~Linares and L.~Rosier.
\newblock Control and stabilization of the {B}enjamin-{O}no equation on a
  periodic domain.
\newblock {\em Trans. Amer. Math. Soc.}, 367(7):4595--4626, 2015.

\bibitem{MR755731}
P.~J. Olver.
\newblock Hamiltonian and non-{H}amiltonian models for water waves.
\newblock In {\em Trends and applications of pure mathematics to mechanics
  ({P}alaiseau, 1983)}, volume 195 of {\em Lecture Notes in Phys.}, pages
  273--290. Springer, Berlin, 1984.

\bibitem{MR2446185}
D.~Pilod.
\newblock On the {C}auchy problem for higher-order nonlinear dispersive
  equations.
\newblock {\em J. Differential Equations}, 245(8):2055--2077, 2008.

\bibitem{MR1216734}
G.~Ponce.
\newblock Lax pairs and higher order models for water waves.
\newblock {\em J. Differential Equations}, 102(2):360--381, 1993.

\bibitem{MR2565262}
L.~Rosier and B.-Y. Zhang.
\newblock Control and stabilization of the {K}orteweg-de {V}ries equation:
  recent progresses.
\newblock {\em J. Syst. Sci. Complex.}, 22(4):647--682, 2009.

\bibitem{MR2486102}
L.~Rosier and B.-Y. Zhang.
\newblock Local exact controllability and stabilizability of the nonlinear
  {S}chr\"odinger equation on a bounded interval.
\newblock {\em SIAM J. Control Optim.}, 48(2):972--992, 2009.

\bibitem{MR1360229}
D.~L. Russell and B.-Y. Zhang.
\newblock Exact controllability and stabilizability of the {K}orteweg-de
  {V}ries equation.
\newblock {\em Trans. Amer. Math. Soc.}, 348(9):3643--3672, 1996.

\bibitem{MR871574}
J.-C. Saut and B.~Scheurer.
\newblock Unique continuation for some evolution equations.
\newblock {\em J. Differential Equations}, 66(1):118--139, 1987.

\bibitem{MR761761}
M.~Schwarz, Jr.
\newblock The initial value problem for the sequence of generalized
  {K}orteweg-de {V}ries equations.
\newblock {\em Adv. in Math.}, 54(1):22--56, 1984.

\bibitem{MR2775188}
C.~F. Vasconcellos and P.~N. da~Silva.
\newblock Stabilization of the {K}awahara equation with localized damping.
\newblock {\em ESAIM Control Optim. Calc. Var.}, 17(1):102--116, 2011.

\bibitem{MR3356494}
X.~Zhao and B.-Y. Zhang.
\newblock Global controllability and stabilizability of {K}awahara equation on
  a periodic domain.
\newblock {\em Math. Control Relat. Fields}, 5(2):335--358, 2015.

\end{thebibliography}
\end{document}